\title{The Dimension of Divisibility Orders and Multiset Posets}
\author{Milan Haiman}\address{Massachusetts Institute of Technology}\email{mhaiman@mit.edu}
\begin{document}

\begin{abstract}
    The Dushnik--Miller dimension of a poset $P$ is the least $d$ for which $P$ can be embedded into a product of $d$ chains. Lewis and Souza showed that the dimension of the divisibility order on the interval of integers $[N/\kappa, N]$ is bounded above by $\kappa (\log\kappa)^{1+o(1)}$ and below by $\Omega((\log\kappa/\log\log\kappa)^2)$. We improve the upper bound to $O((\log \kappa)^3/(\log\log\kappa)^2).$ We deduce this bound from a more general result on posets of multisets ordered by inclusion. We also consider other divisibility orders and give a bound for polynomials ordered by divisibility.
\end{abstract}

\keywords{Partially ordered sets, Dimension, Multisets, Divisibility}

\maketitle

\section{Introduction}\label{sec:introduction}

A \emph{partially ordered set} (abbreviated \emph{poset}) is an ordered pair $(P,\le_P)$ consisting of a set $P$ and a binary relation $\le_P$ on $P$ such that for all $a,b,c\in P$ we have \begin{itemize}
    \item $a\le_P a$,
    \item if $a\le_P b$ and $b\le_P a$, then $a=b$, and
    \item if $a\le_P b$ and $b\le_P c$, then $a\le_P c$.
\end{itemize}
We will refer to a poset $(P,\le_P)$ by just $P$ if the order $\le_P$ is clear from context. In such cases we may also write $\le$ instead of $\le_P$. We will also write $a<b$ to denote $a\le b$ and $a\ne b$. Finally, we will only work with finite posets unless explicitly stated otherwise. For a finite poset $P$, we denote the cardinality of its underlying set by $\abs{P}$.

For a poset $P$ with elements $a$ and $b$, we say that $a$ and $b$ are \emph{comparable} if $a\le b$ or $b\le a$. Otherwise, we say that $a$ and $b$ are \emph{incomparable}. The simplest example of a poset is one in which all pairs of elements are comparable. We call such a poset a \emph{chain} or \emph{linear order} (also known as a \emph{total order}). 

A more complicated example of a poset is the divisibility poset $\D_{[6]}$, which consists of the set $[6]=\{1,2,3,4,5,6\}$ with the relation $a\le b$ if $b$ is divisible by $a$. Note that some pairs of elements are incomparable, such as $5$ and $6$. This poset is depicted in \cref{fig:D6-Hasse}, where $a\le b$ if and only if $a$ is reachable from $b$ by a sequence of downward arrows.

\begin{figure}[h]
    \centering
    \includegraphics[scale=0.9,trim=1in 7.65in 1in 1.65in,clip]{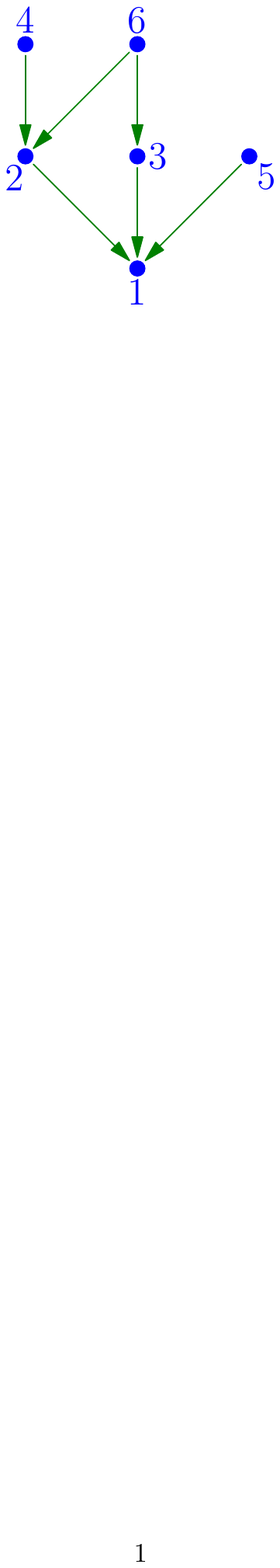}
    \caption{The Hasse diagram of $\D_{[6]}$}
    \label{fig:D6-Hasse}
\end{figure}



In general, for a poset $P$, we call a minimal such diagram the \emph{Hasse diagram} of $P$. The arrows are defined by \emph{covering relations}: $b$ \emph{covers} $a$ if $a < b$ and there is no $c$ such that $a<c<b$.

Given a poset $P$, a natural question to ask is how complicated $P$ is. A notion such as the set cardinality of $P$ would not be a good answer, since it does not consider the relation $\le_P$. For example, a linear order on $[6]$ is intuitively much simpler than $\D_{[6]}$. A potential approach is to try to understand how complicated the Hasse diagram of $P$ is. Specifically, we could ask for the least $d$ such that the Hasse diagram of $P$ can be viewed ``nicely'' in $d$-dimensional space. For instance, the linear order on $[6]$ can be viewed as a line, while $D_{[6]}$ can't. We formalize this idea of dimension using some more definitions.

Given posets $P$ and $Q$, we say that $P$ \emph{embeds} into $Q$, written $P\into Q$, if $P$ is a subset of $Q$ and $\le_P$ is the restriction of $\le_Q$ to $P$. We also say that $P$ is a \emph{suborder} of $Q$.

This notion of poset containment allows us to relate a poset to $d$-dimensional space. We just need to think of $d$-dimensional space as a poset. For this, we define the product of posets.

Given posets $P$ and $Q$, we define the \emph{product poset} $P\times Q$ to be a poset on the set product $P\times Q$ with the relation $(p_1,q_1)\le (p_2,q_2)$ if and only if $p_1\le_Pp_2$ and $q_1\le_Qq_2$. We can also take the product of several posets by iterating this definition.

Now, we can think of $d$-dimensional space as the poset $\R^d$, where we take the usual linear order on $d$ copies of $\R$ and then take a product. More generally, we can think of $d$-dimensional space as a product of any $d$ chains. Since we will only care about finite posets, we can work with finite chains as well. We are now ready to define the dimension of a poset.

The \emph{Dushnik--Miller dimension} of a poset $P$, denoted $\dim(P)$, is the least $d\in\N$ for which $P$ embeds into a product of $d$ chains. Note that dimension exists for all finite posets since we can always embed $P$ into a product of $\abs{P}$ chains. This notion of dimension was introduced by Dushnik and Miller in 1941 \cite{Dushnik-Miller-paper} and has been extensively studied (see \cite{Trotter-book}). Other notions of dimension exist (see \cite{Nesetril-Pudlak-paper,Novak-paper}), but we will only discuss the Dushnik--Miller dimension of posets.

As an example, any linear order has dimension $1$. A more interesting example is the poset $\D_{[6]}$ from above. One can check that $\dim(\D_{[6]})>1$. Furthermore, $\dim(\D_{[6]})\le2$ by the embedding in \cref{fig:D6-dim} of $\D_{[6]}$ into a $4$ by $3$ grid (which is a product of chains of length $4$ and $3$).

\begin{figure}[h]
    \centering
    \includegraphics[scale=0.9,trim=1in 7.35in 1in 1.7in,clip]{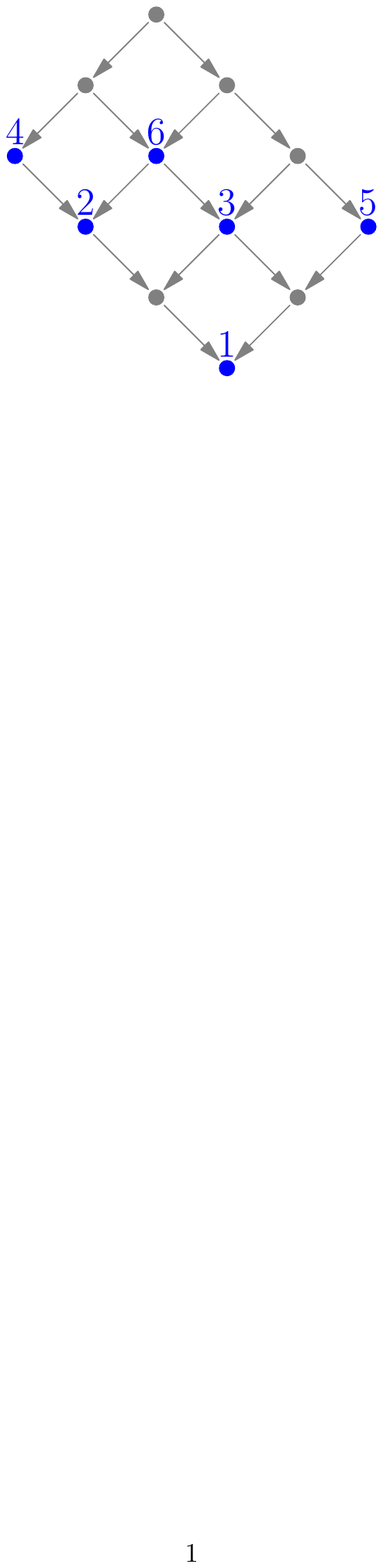}
    \caption{An embedding of $\D_{[6]}$ into a product of two chains}
    \label{fig:D6-dim}
\end{figure}

A more general example is the Boolean hypercube $\Q^n$. We view $\Q^n$ as the poset on subsets of $[n]$, ordered by inclusion (i.e., the ``subset'' relation). Since $\Q^n$ is generally thought of as an $n$-dimensional object, we would expect it to have dimension $n$. In fact, because $\Q^n$ is isomorphic to a product of $n$ chains of length $2$, we have that $\dim(\Q^n)\le n$. We will see in \cref{sec:prelimaries} that indeed $\dim(\Q^n)=n$. Things get more interesting when we consider suborders of $\Q^n$.

Note that $\Q^n$ has a natural partition into ``layers,'' where we partition subsets of $[n]$ by their cardinality. The Hasse diagram of $\Q^n$ can be drawn with each layer being on a horizontal line. So, a natural way to take a suborder of $\Q^n$ is to just consider some subset of its layers. For $I\subseteq [0,n]$, we let $\Q^n_I$ be the suborder of $\Q^n$ on subsets $S$ of $[n]$ for which $\abs{S}\in I$. The dimension of such suborders has been studied in detail; see \cite{Kierstead-overview} for a survey by Kierstead.

We will see in \cref{sec:prelimaries} that for $0<k<\ell<n$, $\dim(\Q^n_{\{k,\ell\}})=\dim(\Q^n_{[k,\ell]})$. That is, if we take a suborder on some layers of the hypercube, it only matters what the lowest and highest layers are. In particular, to understand $\dim(\Q^n_I)$ for arbitrary $I$ we just need to understand $\dim(\Q^n_{\{k,\ell\}})$. Several results are known, and we present some of them here. The case $k=1$ was studied first. Dushnik gave an exact formula for $\dim(\Q^n_{[1,\ell]})$ when $\ell>2\sqrt{n}$ in \cite{Dushnik-paper}. In particular, we have the following theorem.

\begin{theorem}[Dushnik]
If $\ell>2\sqrt{n}$, then $\dim(\Q^n_{[1,\ell]})>n-\sqrt{n}.$
\end{theorem}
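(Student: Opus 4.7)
The plan is to use the critical-pair characterization of Dushnik--Miller dimension: a realizer of size $d$ must reverse each critical pair, and my goal is to show that this is impossible when $d \le n-\sqrt n$ and $\ell > 2\sqrt n$.

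\emph{Step 1 (Critical pairs).} I would first identify the critical pairs of $\Q^n_{[1,\ell]}$ as exactly the pairs $(\{i\},Y)$ with $i \notin Y$ and $|Y|=\ell$. Recall that an incomparable pair $(x,y)$ is critical iff every $z<x$ satisfies $z\le y$ and every $z>y$ satisfies $z\ge x$. For $x=\{i\}$ the first condition is vacuous because $\{i\}$ is minimal, and for $y=Y$ the second condition forces $Y$ to be maximal, i.e., $|Y|=\ell$.

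\emph{Step 2 (Reduction to a covering condition).} Given a realizer $L_1,\dots,L_d$, let $\pi_k$ be the restriction of $L_k$ to singletons, viewed as a permutation of $[n]$. Because $\{j\} \le_P Y$ whenever $j\in Y$, the pair $(\{i\},Y)$ is reversed in $L_k$ precisely when every $j\in Y$ precedes $i$ in $\pi_k$. Setting $B_k(i) := \{j : j <_{\pi_k} i\}$, the realizer requirement becomes: for every $i\in[n]$,
\[
\bigcup_{k=1}^d \binom{B_k(i)}{\ell} \;=\; \binom{[n]\setminus\{i\}}{\ell},
\]
and in particular $\sum_{k=1}^d \binom{|B_k(i)|}{\ell} \ge \binom{n-1}{\ell}$. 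Moreover, for each fixed $k$, as $i$ ranges over $[n]$ in $\pi_k$-order, the sets $B_k(i)$ form a maximal chain in $2^{[n]}$; equivalently, $|B_k(i)|$ takes each value in $\{0,1,\dots,n-1\}$ exactly once.

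\emph{Step 3 (Deriving the lower bound).} Suppose for contradiction that $d\le n-\sqrt n$. Writing $|B_k(i)|=n-1-r_k(i)$, where $r_k(i)$ is the ``rank from the top'' of $i$ in $\pi_k$, the covering inequality reads $\sum_k \binom{n-1-r_k(i)}{\ell}\ge \binom{n-1}{\ell}$. The ratio $\binom{n-1-r}{\ell}/\binom{n-1}{\ell}$ behaves like $(1-r/(n-1))^\ell$, which for $\ell>2\sqrt n$ drops below $e^{-2}$ already when $r\ge\sqrt n$. So only singletons within roughly $\sqrt n$ of the top of $\pi_k$ contribute nontrivially to $i$'s cover, while the chain constraint $\sum_i r_k(i)=\binom{n}{2}$ limits how many indices can simultaneously be near the top across the $d$ permutations. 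A careful global accounting of the indices $i$ for which every $r_k(i)$ is moderately small, balanced against the total contribution forced by the covering inequality, should then yield the contradiction $d>n-\sqrt n$.

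\emph{Main obstacle.} The crux is the quantitative argument in Step 3. A naive double count, $d\binom{n}{\ell+1}\ge n\binom{n-1}{\ell}$, gives only $d\ge\ell+1$, and the single-parameter pigeonhole ``only $dt$ indices lie in the top-$t$ of some $\pi_k$'' also falls short once $d$ approaches $n$. The true strength of Dushnik's argument lies in using the covering inequality \emph{jointly} with the chain structure of the $B_k(i)$ and the sharp decay of $\binom{n-1-r}{\ell}/\binom{n-1}{\ell}$; pinpointing the precise combinatorial identity (plausibly of Bollob\'as set-pair or entropy-counting type) that converts the threshold $\ell>2\sqrt n$ into the matching threshold $d>n-\sqrt n$ is the essential technical step I would need to carry out.
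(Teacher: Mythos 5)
The paper does not prove Dushnik's theorem; it cites it from Dushnik's 1950 paper, so there is no internal proof to compare against. Judged on its own, your proposal sets up the problem correctly but stops short of the actual content.

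Steps 1 and 2 are sound (with one small inaccuracy). The critical pairs are indeed the $(\{i\},Y)$ with $|Y|=\ell$, $i\notin Y$, and a realizer $L_1,\dots,L_d$ forces the covering
$\bigcup_{k=1}^d \binom{B_k(i)}{\ell}=\binom{[n]\setminus\{i\}}{\ell}$
for every $i$, where $B_k(i)$ is the set of singletons below $\{i\}$ in $L_k$. Your ``precisely when'' is too strong: $Y\subseteq B_k(i)$ is necessary for $L_k$ to reverse $(\{i\},Y)$, not sufficient for an arbitrary extension $L_k$. Since the lower bound only uses necessity, this does not harm the argument, but you should state it as an implication. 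This is exactly the standard reduction identifying $\dim(\Q^n_{\{1,\ell\}})$ with the minimum size of an $(\ell+1)$-\emph{suitable} family of permutations of $[n]$, in Dushnik's terminology.

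The genuine gap is Step 3, and you say so yourself. Unfortunately the direction you sketch is unlikely to close it. Passing from the set-covering condition to the size inequality $\sum_k\binom{|B_k(i)|}{\ell}\ge\binom{n-1}{\ell}$ throws away the overlap structure, and your own double count already shows that information is lost badly (it only yields $d\ge\ell+1$). Refining with ``only $r_k(i)\lesssim\sqrt n$ contributes'' plus the budget $\sum_i r_k(i)=\binom n2$ still only controls sizes, not which elements lie in $B_k(i)$, and I do not see how it can produce the sharp threshold $n-\sqrt n$ rather than something off by polylogarithmic factors. Dushnik's actual argument is structural, not analytic: reformulate the condition for a fixed $i$ as ``the family of top segments $A_k(i)=[n]\setminus(\{i\}\cup B_k(i))$ admits no transversal of size $\ell$,'' and then exploit the fact that $A_k(i)$ is always an initial segment (from the top) of $\pi_k$. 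One then builds explicit small transversals out of the elements occupying the top $j$ positions across the permutations, with $j\approx\sqrt n$ chosen to balance the number $dj$ of top-$j$ slots against $n$; the arithmetic $\lfloor n/j\rfloor+j-1$ in Dushnik's exact formula for $N(n,k)$ is precisely this balance, and specializing $j=\lfloor\sqrt n\rfloor$ gives the stated $\ell>2\sqrt n\Rightarrow d>n-\sqrt n$. Until that hitting-set construction is carried out, the proof is not there, and a binomial-decay or entropy argument is the wrong tool to reach for.
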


By using similar techniques, one can obtain the following corollary \cite{Trotter-book}.

\begin{corollary}[Trotter] 
If $\ell<\sqrt{n},$ then $\dim(\Q^n_{[1,\ell]})>\frac{\ell^2}{4}.$
\end{corollary}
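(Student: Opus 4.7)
The plan is to invoke the standard linear-extension characterization of Dushnik--Miller dimension and recast the problem as a combinatorial covering question about permutations of $[n]$, then apply a refined extremal argument.

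First, by the equivalence $\dim(\Q^n_{\{1,\ell\}}) = \dim(\Q^n_{[1,\ell]})$ mentioned in the preceding discussion, it suffices to bound $\dim(\Q^n_{\{1,\ell\}})$ from below. In this bipartite poset, the incomparable pairs are the pairs of a singleton and an $\ell$-subset not containing it, and a direct check of the critical-pair conditions shows that the critical pairs---those any realizer must reverse---are exactly the ``upward'' pairs $(\{i\}, A)$ with $i \notin A$; the reverse pair $(A, \{i\})$ fails criticality because for any $j \in A$ one has $\{j\} < A$ while $\{j\} \not< \{i\}$. Hence $\dim(\Q^n_{\{1,\ell\}})$ equals the least $d$ for which there exist linear extensions $L_1,\ldots,L_d$ with each critical pair $(\{i\}, A)$ satisfying $\{i\} >_{L_c} A$ for some $c$.

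Second, I would recast this via permutations of $[n]$. Each $L_c$ restricts to a permutation $\pi_c$ of the singletons, and we may place each $\ell$-subset $A$ in $L_c$ immediately after the last element of $A$ in $\pi_c$; this choice maximizes the set of singletons above $A$ in $L_c$ and so loses no coverage. Under this convention, $\{i\} >_{L_c} A$ is equivalent to $i$ coming after every element of $A$ in $\pi_c$, i.e., to $i$ being the last element of $S = \{i\}\cup A$ in $\pi_c$. So $\dim(\Q^n_{\{1,\ell\}})$ equals the minimum $d$ for which there exist permutations $\pi_1,\ldots,\pi_d$ of $[n]$ such that for every $(\ell+1)$-subset $S\subseteq[n]$ and every $i\in S$, some $\pi_c$ places $i$ last in $S$.

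To prove $d > \ell^2/4$, I would argue by contradiction, supposing $d \le \ell^2/4$. A naive double-count using the hockey-stick identity $\sum_i \binom{p_c(i)-1}{\ell} = \binom{n}{\ell+1}$ only yields $d \ge \ell+1$, so sharpening requires the hypothesis $\ell < \sqrt n$. My plan is to pigeonhole on the top positions of the $\pi_c$ to find an element $i^*\in[n]$ whose maximum rank $\max_c p_c(i^*)$ is at most roughly $n - n/d$; the corresponding tail sets $U_c = \{j \neq i^* : j \text{ follows } i^* \text{ in } \pi_c\}$ then each have size at least $n/d \ge 4n/\ell^2$, which is substantial since $\ell^2 < n$. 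An extremal (transversal-style) argument on this system of large sets should then produce an $\ell$-subset $A\subseteq[n]\setminus\{i^*\}$ meeting every $U_c$, contradicting the coverage requirement for the critical pair $(\{i^*\}, A)$.

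The principal obstacle is this final extremal step. The naive counting only reaches $\ell+1$, and obtaining $\ell^2/4$ requires using $\ell^2 < n$ decisively, combined with a delicate transversal argument about how the tail sets of $d$ permutations fail to be simultaneously avoidable by any $\ell$-subset. I expect this is the technical heart of the proof, paralleling the method of Dushnik's original argument.
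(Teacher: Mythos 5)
The paper itself does not prove this corollary --- it cites Trotter's book with the remark that it follows from Dushnik's techniques --- so there is no in-paper argument to compare against; but your proposal has a genuine gap in its final step. Your reductions are correct: the critical pairs of $\Q^n_{\{1,\ell\}}$ are exactly the pairs $(\{i\},A)$ with $i\notin A$, one may restrict to lexicographic extensions induced by permutations of $[n]$, and the problem is equivalent to bounding from below the size of an $(\ell+1)$-suitable family of permutations. The trivial double count giving $d\ge\ell+1$ is also fine.

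The trouble is the transversal step. Pigeonholing does produce an $i^*$ with tail sets $U_c$ of size roughly $n/d$ in each $\pi_c$, but nothing constrains these $d$ sets to overlap: their total size $d\cdot n/d\approx n$ just barely exceeds $|[n]\setminus\{i^*\}|=n-1$, so they can be chosen essentially pairwise disjoint, in which case an $\ell$-element set $A$ meets only about $\ell$ of them, whereas $d\approx\ell^2/4>\ell$ once $\ell\ge5$. So the $\ell$-subset hitting every $U_c$ that you need may simply fail to exist, and the plan breaks at precisely the point you flagged as the ``technical heart.'' The hypothesis $\ell<\sqrt n$ is not there to make a small transversal possible; its role is to let you discard most of the ground set. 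The standard route is to restrict the realizer to a subset $T\subseteq[n]$ of size $m\approx\ell^2/4$ (legal since $\ell^2<n$, and $m>\ell$ once $\ell\ge5$); the restricted permutations remain an $(\ell+1)$-suitable family of $T$, and now $\ell\gtrsim2\sqrt m$ puts you in the regime of Dushnik's theorem, whose block-counting argument gives a lower bound of order $m\approx\ell^2/4$. Replacing the single-$i^*$-plus-transversal step with this restriction-to-a-small-ground-set step is what is needed to complete the proof.
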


Together, these results show that 
$\dim(\Q^n_{[1,\ell]})$ grows at least quadratically in $\ell$ until it gets close to the upper bound of $\dim(\Q^n_{[1,n]})=n$.

A result by F\"{u}redi and Kahn shows that this quadratic growth is correct up to a $\log n$ factor~\cite{Furedi-Khan-paper}. Note that we use $\log$ to denote the natural logarithm throughout this paper.

\begin{theorem}[F\"{u}redi--Kahn]\label{thm:weak-subsets}
For integers $1\le \ell\le n$, we have $\dim(\Q^n_{[1,\ell]})\le(\ell+1)^2\log n.$
\end{theorem}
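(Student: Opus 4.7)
The plan is to exhibit $(\ell+1)^2 \log n$ linear extensions of $\Q^n_{[1,\ell]}$ whose common intersection, as binary relations, is $\subseteq$; this is the standard linear-extension characterization of $\dim$, equivalent to embedding into a product of $(\ell+1)^2\log n$ chains. Concretely, for every pair $A, B \in \Q^n_{[1,\ell]}$ with $A \not\subseteq B$, some extension must place $B$ before $A$. For each such incomparable pair, fix once and for all a witness $a \in A \setminus B$; the task reduces to handling, for every $a \in [n]$ and $B \subseteq [n] \setminus \{a\}$ with $|B| \le \ell$, all incomparable pairs whose witness is this $(a, B)$.

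The key construction is as follows. For each subset $R \subseteq [n]$, I define a linear extension $L_R$ by first laying down, in any linear sub-extension of $\subseteq$, the sets $S$ with $S \cap R = \emptyset$, and then, in another sub-extension, the remaining sets with $S \cap R \ne \emptyset$. This is a valid linear extension of $\subseteq$ on $\Q^n_{[1,\ell]}$: if $X \subsetneq Y$, then $X \cap R \subseteq Y \cap R$, so $X$ can never lie in the second class while $Y$ lies in the first. Moreover, $L_R$ places $B$ before $A$ whenever $B \cap R = \emptyset$ and $A \cap R \ne \emptyset$; invoking the chosen witness $a \in A \setminus B$, this holds whenever $a \in R$ and $R \cap B = \emptyset$.

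It therefore suffices to build a family $R_1, \ldots, R_t$ of subsets of $[n]$, with $t$ at most $(\ell+1)^2 \log n$, such that for every $a \in [n]$ and $B \subseteq [n] \setminus \{a\}$ with $|B| \le \ell$, some $R_i$ contains $a$ and is disjoint from $B$. The natural approach is the probabilistic method: take each $R_i$ to include each element of $[n]$ independently with probability $p = 1/(\ell+1)$. Then for a fixed pair $(a, B)$, the probability that a single $R_i$ works is $p(1-p)^{|B|} \ge p(1-p)^\ell \ge 1/(e(\ell+1))$, so $t$ trials all fail with probability at most $\exp(-t/(e(\ell+1)))$. Since there are at most $n^{\ell+1}$ relevant pairs $(a, B)$, the union bound succeeds for $t$ on the order of $(\ell+1)^2\log n$.

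The main obstacle I anticipate is pinning down the constant $1$ in the stated bound, as the crude estimate above only yields $C(\ell+1)^2\log n$ for some $C > 1$ (roughly $C \approx e$). Sharpening to match the paper's constant should be possible by splitting the union bound by $|B| = k$, using the tighter probability $p(1-p)^k$ against the count $(n-k)\binom{n-1}{k}$ instead of the uniform estimate $p(1-p)^\ell$ against $n^{\ell+1}$, or by a small derandomized or greedy improvement. Conceptually all the work is in the covering-by-random-subsets construction; the reduction to critical-pair covering is routine.
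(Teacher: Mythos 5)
Your overall framework---build a realiser probabilistically by reducing to witness pairs $(a,B)$ with $a\in[n]$, $B\subseteq[n]\setminus\{a\}$, $|B|\le\ell$, and then apply a union bound---has the right shape and parallels F\"uredi--Kahn. The gap is in your choice of random object, and it is genuine: a random subset $R$ with each element included independently with probability $p=1/(\ell+1)$ gives per-pair success probability
$p(1-p)^{|B|}\ge\frac{1}{\ell+1}\left(\frac{\ell}{\ell+1}\right)^\ell=\frac{1}{\ell+1}\left(1+\frac{1}{\ell}\right)^{-\ell}$,
which falls short of $\frac{1}{\ell+1}$ by a factor $(1+1/\ell)^\ell\in[2,e)$, and that factor propagates directly into the number of extensions you need. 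Your suggested repairs do not close this. Splitting the union bound by $k=|B|$ and using $(n-k)\binom{n-1}{k}$ in place of $n^{\ell+1}$ only trims roughly $\ell\log\ell$ from the exponent, which is negligible against $(\ell+1)\log n$ unless $\ell$ is a positive power of $n$; the dominant term in the sum is still $k=\ell$ and the $(1+1/\ell)^\ell$ loss remains. Tuning $p$ separately for each $k$ forces disjoint families of extensions and blows up to $\Theta(\ell^3\log n)$ total. And greedy/derandomized selection merely achieves the expectation guarantee; it does not beat it.

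The missing idea, which is both the paper's and F\"uredi--Kahn's, is to replace random subsets by uniformly random total orders $\sigma$ on $[n]$, and your two-block extension $L_R$ by an order-driven extension: for instance, order sets $S$ by $\max_\sigma S$, breaking ties by a fixed linear extension of $\subseteq$ (or use the lexicographic extension $L_\sigma$ as in the proof of \cref{lem:L1}). This is a valid linear extension, and for a witness pair $(a,B)$ the event ``$a>_\sigma b$ for all $b\in B$'' forces $B<_{L_\sigma}A$ for every $A\ni a$. Crucially that event has probability \emph{exactly} $\frac{1}{|B|+1}\ge\frac{1}{\ell+1}$, with no $e$-loss. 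Taking $d=(\ell+1)^2\log n$ independent samples, the expected number of uncovered witness pairs is at most $n^{\ell+1}\left(1-\frac{1}{\ell+1}\right)^d<n^{\ell+1}e^{-d/(\ell+1)}=1$, so some choice of $d$ total orders works, yielding the stated bound.
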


For the case of general $k$, the lower bounds for $k=1$ also imply that $\dim(\Q^n_{[k,\ell]})$ grows at least quadratically in $(\ell-k)$, by noting that $\Q^n_{[k,\ell]}$ contains a copy of $\Q^{n-k+1}_{[1,\ell-k+1]}$

In 1994, Brightwell, Kierstead, Kostochka, and Trotter gave the following upper bound \cite{BKKT-paper}. 

\begin{theorem}[Brightwell--Kierstead--Kostochka--Trotter]\label{thm:subsets}
Let $0\le k< \ell\le n$ be integers. Then we have $\dim(\Q^n_{[k,\ell]})=O((\ell-k)^2\log n).$
\end{theorem}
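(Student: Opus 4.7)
The plan is to attack this via the standard linear-extension characterization of Dushnik--Miller dimension: $\dim(P) \le d$ if and only if there exist $d$ linear extensions $L_1, \ldots, L_d$ of $P$ such that for every incomparable pair $(a, b)$ in $P$, at least one $L_i$ has $a >_{L_i} b$. It therefore suffices to produce $d = O((\ell-k)^2 \log n)$ linear extensions of $\Q^n_{[k,\ell]}$ whose reversals cover every incomparable pair.

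First, I would invoke the observation (to be established in \cref{sec:prelimaries}) that $\dim(\Q^n_{[k,\ell]}) = \dim(\Q^n_{\{k,\ell\}})$, reducing the problem to the two-layer poset. Here the incomparable pairs we must reverse are exactly the pairs $(A,B)$ with $\abs{A} = k$, $\abs{B} = \ell$, and $A \not\subseteq B$.

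Next, I would construct a random linear extension $L_\pi$ from a uniformly random permutation $\pi$ of $[n]$. The design goal is that for every fixed bad pair $(A,B)$, the reversal probability $\Pr[A >_{L_\pi} B]$ is bounded below by some $p = \Omega(1/(\ell-k)^2)$ \emph{depending only on the gap} $\ell-k$, not on $k$ or $\ell$ individually. The naive rule --- rank sets by the $\pi$-position of their largest element --- only gives $\Omega(1/(k+\ell))$, so more delicate combinatorics is required. A natural attempt is to rank a set $X$ of size $s$ using the $\pi$-position of its $(s-k+1)$-th smallest element (or, more robustly, to group $[n]$ into $\Theta(\ell-k)$ $\pi$-blocks and let $L_\pi$ compare sets by a block signature); the required analysis amounts to showing that with probability $\Omega(1/(\ell-k)^2)$, the elements of $A \setminus B$ distribute into $\pi$ in a way that dominates the contribution of $B$. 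This is the heart of the argument and the only step genuinely tied to the $(\ell-k)^2$ factor.

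Finally, I would take $d$ independent random extensions $L_{\pi_1}, \ldots, L_{\pi_d}$ and argue that some realization covers all bad pairs. A naive union bound over the $\le n^{k+\ell}$ bad pairs would cost an extra factor of $k+\ell$, so instead I would apply the Lov\'asz Local Lemma: the event ``$(A,B)$ is never reversed'' depends only on $\pi_j|_{A \cup B}$ for each $j$, so it is mutually independent of all events indexed by pairs $(A',B')$ with $(A' \cup B') \cap (A \cup B) = \emptyset$. The dependency degree is then $O((k+\ell)\, n^{k+\ell-1})$, the bad event has probability at most $(1-p)^d \le e^{-pd}$, and LLL closes the argument at $d = O((\ell-k)^2 \log n)$.

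The main obstacle is the middle step: engineering a random linear extension whose reversal probability for every bad pair is $\Omega(1/(\ell-k)^2)$ rather than the easier $\Omega(1/(k+\ell))$. This gap is precisely what separates a weaker $(k+\ell)^2 \log n$ bound from the stated $(\ell-k)^2 \log n$ bound, and it is where the combinatorial content of the theorem lies.
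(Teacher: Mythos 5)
Your overall framework (linear-extension realisers, reduction to $\Q^n_{\{k,\ell\}}$, a probabilistic construction of extensions) is sound, but the closing LLL step does not deliver the claimed bound. With the dependency degree $D=\Theta\bigl((k+\ell)n^{k+\ell-1}\bigr)$ you compute, the Local Lemma condition $e\,q\,(D+1)\le 1$ applied to the bad-event probability $q\le e^{-pd}$ forces $pd\ge 1+\log(D+1)=\Theta\bigl((k+\ell)\log n\bigr)$; with $p=\Omega(1/(\ell-k)^2)$ this gives $d=\Omega\bigl((k+\ell)(\ell-k)^2\log n\bigr)$, the same order as the naive union bound. The LLL only helps when $\log D$ is substantially smaller than the logarithm of the total number of events, but here $D$ is only a factor of roughly $n/(k+\ell)$ below the number of critical pairs, so $\log D$ and $(k+\ell)\log n$ agree up to lower-order terms; the $(k+\ell)$ factor persists.

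The BKKT argument (which the paper adapts to multisets in \cref{sec:multisets}) avoids this by \emph{not} seeking a single random construction with reversal probability $\Omega(1/(\ell-k)^2)$ for every pair. Instead it splits the critical pairs $(A,B)$ by $b=|B\setminus A|$. When $b\le 3(\ell-k)$, it uses lexicographic extensions from random total orders, but the union bound is taken over a much smaller family of \emph{certificates} $(x,Y)$ with $|Y|=3(\ell-k)$: the pair $(A,B)$ is reversed whenever some $x\in A\setminus B$ is $\sigma$-above all of a fixed $Y\supseteq B\setminus A$, and there are only about $n^{3(\ell-k)+1}$ such certificates, which is what yields the clean $(\ell-k)^2\log n$ count (\cref{lem:L1}). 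When $b>3(\ell-k)$, a separate \emph{deterministic} construction is used: a ``good'' family of $O(\log n)$ partitions of $[n]$ into $O(\ell-k)$ blocks (\cref{lem:good-func}) together with a pigeonhole argument exploiting $|B|-|A|\le\ell-k$ (\cref{lem:L2}). Both the certificate idea and this second track are missing from your proposal, and you yourself flag the reversal-probability step as unresolved; neither gap can be patched within the single-construction-plus-LLL framing.
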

This shows that $\dim(\Q^n_{[k,\ell]})$ grows quadratically in $(\ell-k)$, up to a $\log n$ factor. These bounds are relatively tight when $\ell-k$ is large compared to $\log n$. For results on the case when $\ell-k$ is smaller, see \cite{Kierstead-paper,Spencer-paper}.

A natural extension of this problem on subsets is to instead consider multisets of $[n]$. Let $\M^n$ be the (infinite) poset on all multisets of $[n]$, ordered by inclusion. As before, for $I\subseteq \N$, let $\M^n_I$ be the suborder of $\M^n$ on multisets $S$ of $[n]$ for which $\abs{S}\in I$ (in this paper $I$ will always be finite). Here $\abs{S}$ counts elements of $S$ with multiplicity. 

Our first new result is the following extension of \cref{thm:subsets}.\footnote{We have not optimized the constants in new results in this paper. We also omit some floors and ceilings.}

\begin{theorem}\label{thm:multisets} Let $k,\ell,n$ be positive integers with $k< \ell$. Then we have
$$\dim(\M_{[k,\ell]}^n)\le 34(\ell-k)^2\log n.$$
\end{theorem}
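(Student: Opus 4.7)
The plan is to adapt the Brightwell--Kierstead--Kostochka--Trotter argument behind \cref{thm:subsets} to multisets. First I would prove the multiset analog of the ``layer collapse'' mentioned for subsets in \cref{sec:prelimaries}, namely $\dim(\M^n_{[k,\ell]}) = \dim(\M^n_{\{k,\ell\}})$; the subset proof adapts essentially verbatim, since it only uses that one can slide between layers by dropping or adding single ground-set elements. This reduces the problem to the bipartite poset $\M^n_{\{k,\ell\}}$ whose critical pairs are $(S, T)$ with $\abs{S} = k$, $\abs{T} = \ell$, and some witness coordinate $j^\ast \in [n]$ satisfying $\text{mult}_S(j^\ast) > \text{mult}_T(j^\ast)$.

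Next I would construct a realizer of size $O((\ell-k)^2 \log n)$ via the two-parameter BKKT template: roughly $O(\log n)$ random partitions of the ground set $[n]$ into a bounded number of blocks (chosen so every pair of elements of $[n]$ is separated by some partition), crossed with $O((\ell-k)^2)$ linear extensions per partition. Each extension would be obtained by sorting $\M^n_{\{k,\ell\}}$ lexicographically by a monotone signature: a tuple of block-wise multiplicity sums $\bigl(\sum_{j \in B_i} \text{mult}_M(j)\bigr)_i$ with prescribed sign conventions and tie-breaks. Monotonicity under multiset inclusion makes each such sort a valid linear extension, and for each critical pair $(S,T)$ the goal is to show the family contains an extension that causes the witness inequality $\text{mult}_S(j^\ast) > \text{mult}_T(j^\ast)$ to propagate to the signature level so as to reverse $(S,T)$.

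The main obstacle is this propagation step. In the subset case the signed difference $D = T - S$ has entries in $\{-1, 0, 1\}$, so once a partition isolates $j^\ast$ from the other coordinates, the block-sum containing $j^\ast$ is automatically negative and the pair reverses. For multisets the entries of $D$ can have arbitrary magnitudes summing to $\ell - k$, so a single large positive entry landing in the same block as $j^\ast$ can overwhelm it. Calibrating the random partition and signature to control not only which coordinates land together but also the weighted mass they carry is the genuinely new step, and it is also why a naive reduction through the encoding $\M^n_{[k,\ell]} \into \Q^{n\ell}_{[k,\ell]}$ (via $S \mapsto \{(i,j) : j \le \text{mult}_S(i)\}$) combined with \cref{thm:subsets} is unsatisfactory: it would introduce a $\log(n\ell)$ factor that degenerates to $\log \ell$ when $\ell \gg n$.
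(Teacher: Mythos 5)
Your proposal correctly diagnoses the core difficulty — the signed difference $T-S$ can have entries of arbitrary magnitude, so a block containing a witness for $S\not\subseteq T$ may have its contribution swamped — but it stops there; ``calibrating the random partition and signature to control weighted mass'' names the problem without supplying a mechanism, so this is a genuine gap. The paper's resolution is also structurally different from your sketch: it does not perform your layer collapse to $\M^n_{\{k,\ell\}}$ (though that reduction does go through), nor does it build one $O((\ell-k)^2\log n)$-sized partition-based family. Instead it splits the incomparable pairs into two cases according to $|\supp(T\setminus S)|$, the number of coordinates where $T$ strictly exceeds $S$, and handles each with a different tool.

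When $|\supp(T\setminus S)|\le 3(\ell-k)$, the paper uses no partitions at all: it builds a F\"uredi--Kahn style family of lexicographic extensions from $(3(\ell-k)+1)^2\log n$ random total orders $\sigma$ of $[n]$, chosen so that for every $3(\ell-k)$-set $Y$ and every $x\notin Y$, some $\sigma$ places $x$ above all of $Y$; then taking $x\in\supp(S\setminus T)$ and $Y\supseteq\supp(T\setminus S)$ reverses the pair, and multiplicities never enter. When $|\supp(T\setminus S)|>3(\ell-k)$, the paper uses $(a,b,r,t,n)$-good partitions with $a=b=3(\ell-k)$ and $t=3\log n$, but only \emph{two} extensions per block, ordering by the restricted size $|S_{\alpha,\tau}|$ with opposite tie-breaks. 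Your magnitude worry is sidestepped by pigeonhole rather than by calibration: since $|T|-|S|\le\ell-k$ while a good partition spreads $\supp(T\setminus S)$ across more than $\ell-k$ blocks, some block meeting $\supp(T\setminus S)$ satisfies $|S_{\alpha,\tau}|=|T_{\alpha,\tau}|$ exactly with $S_{\alpha,\tau}\ne T_{\alpha,\tau}$, and the opposite tie-breaks reverse the pair; one never needs the witness to dominate a block, only to tie one. Consequently the quadratic factor comes entirely from the lexicographic family (the $\LL_1$ side), while the partition family $\LL_2$ contributes only $18(\ell-k)\log n$, not the $\Theta((\ell-k)^2)$ extensions per partition that your sketch anticipates. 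Your closing observation that the reduction through $\Q^{n\ell}_{[k,\ell]}$ degrades to a $\log\ell$ factor is correct and does rule out that shortcut.
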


Next, we consider a problem posed by Lewis and Souza in 2021. For a set $R$ of positive integers, let $\D_R$ be the poset on $R$, ordered by divisibility. Earlier we saw an example with $R=[6]$ and showed that $\dim(\D_{[6]})=2$. More generally, one can ask how $\dim(\D_R)$ behaves in terms of $R$. A natural choice for $R$ is the interval $[N]$. Lewis and Souza (with an improvement to the upper bound due to Souza and Versteegen) essentially solved this case \cite{Lewis-Souza-paper, improved-divisibility}.

\begin{theorem}[Lewis--Souza, Souza--Versteegen] Let $N$ be a positive integer. Then we have
$$\left(\frac{1}{16}-o(1)\right)\frac{(\log N)^2}{(\log\log N)^2}\le \dim(\D_{[N]})\le \left(\frac{4}{\log2}+o(1)\right)\frac{(\log N)^2\log\log\log N}{(\log\log N)^2}.$$
\end{theorem}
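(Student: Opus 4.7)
For the lower bound, I would exhibit a subposet of $\D_{[N]}$ with dimension at least $(1/16 - o(1))(\log N)^2/(\log\log N)^2$. Set $n = (1+o(1))\log N/\log\log N$, the largest index with $P_n = p_1 p_2 \cdots p_n \le N$ (where $p_i$ is the $i$-th prime); this holds by the prime number theorem. The divisors of $P_n$ form a copy of $\Q^n$ inside $\D_{[N]}$, giving $\dim(\D_{[N]}) \ge n$ immediately. To upgrade to a quadratic bound, one must use more than one ``scale'' of primes so that $[1,N]$ accommodates a structure of dimension $\Theta(n^2)$. A natural approach is to take primes in two carefully chosen size ranges (say near $N^\alpha$ and $N^{1-\alpha}$) whose pairwise products populate $[1, N]$ densely enough to embed a standard-example-like poset indexed by $n \times n$ pairs. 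Optimizing over $\alpha$ and the sizes of the two batches yields the constant $1/16$.

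For the upper bound, I would embed $\D_{[N]}$ into a product of $(4/\log 2 + o(1))(\log N)^2 \log\log\log N/(\log\log N)^2$ chains. The strategy is to partition the primes by magnitude into classes $\Pi_k = \{p : N^{1/(k+1)} < p \le N^{1/k}\}$, for $k = 1, 2, \ldots$. Each integer $n \in [N]$ decomposes uniquely as a product of its $\Pi_k$-parts; in each class only $O(k)$ distinct primes can appear, each with exponent at most $k$. Applying \cref{thm:multisets} to the class-$k$ part bounds its contribution to the dimension by $O(k^2 \log|\Pi_k|) = O(k \log N)$. A naive sum over $k$ up to $O(\log N/\log\log N)$ only gives $O((\log N)^3/(\log\log N)^2)$, which matches the bound proved in this paper for $\D_{[N/\kappa,N]}$. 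To reach the sharper bound, one regroups the classes geometrically so that the $\log|\Pi_k|$ term in \cref{thm:multisets} collapses to $\log\log\log N$, and then balances the remaining parameters to extract the constant $4/\log 2$.

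The main obstacle is the upper bound, specifically the recovery of the $\log\log\log N$ factor rather than the naive $\log\log N$, and the sharp constant $4/\log 2$. The key delicate step is combining the class-wise chain embeddings additively---not multiplicatively---which is possible because integers decompose uniquely as products of their class parts, so chain-embeddings from each class can be concatenated if one builds a uniform chain structure across classes. The lower-bound construction is also a nontrivial obstacle: the simplest approach (a single primorial) yields only a linear bound, so one genuinely needs a multi-scale prime construction to force a quadratic lower bound. Pinning down both the precise construction for $1/16$ and the precise embedding for $4/\log 2$ requires careful optimization that is not immediate from the general techniques.
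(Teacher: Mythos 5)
This theorem is quoted by the paper, not proved in it: it is attributed to Lewis--Souza and Souza--Versteegen and cited from \cite{Lewis-Souza-paper,improved-divisibility}, so there is no in-paper argument for me to compare against. What I can assess is whether your reconstruction is sound, and there are real gaps in both directions.

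For the lower bound, your single-primorial $\Q^n$ observation is correct but, as you note, only linear; the problem is that your proposed fix is not the right one. The construction that actually gives $\ell^2/4$-type growth (and which the paper itself names in Section~\ref{sec:further-directions}) is to embed the truncated hypercube $\Q^n_{[1,\ell]}$ into $\D_{[N]}$: take the first $n$ primes and map each subset of size at most $\ell$ to the product of its primes. Choosing $n\approx\ell^2$ so that $p_n\approx 2\ell^2\log\ell$, any product of at most $\ell$ of these primes is at most about $(2\ell^2\log\ell)^\ell$, which is $\le N$ once $\ell\approx\log N/(2\log\log N)$. Then the Trotter corollary $\dim(\Q^n_{[1,\ell]})>\ell^2/4$ immediately yields $(1/16-o(1))(\log N)^2/(\log\log N)^2$. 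Your ``two carefully chosen size ranges near $N^\alpha$ and $N^{1-\alpha}$'' plus ``a standard-example-like poset indexed by $n\times n$ pairs'' is not this construction, and I do not see how it would be made to work: a standard example of size $m$ has $2m$ elements and needs $m$ pairwise incomparabilities, and two scales of primes do not obviously populate $[1,N]$ with such a structure at quadratic scale. The right slogan is ``many small primes, few factors per number,'' not ``two scales.''

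For the upper bound, the class decomposition $\Pi_k=\{p:N^{1/(k+1)}<p\le N^{1/k}\}$ is a sensible starting point, and it is true that $\D_{[N]}$ embeds into the product of the per-class divisor posets, so the dimensions add. But the pivot of your sketch, ``regroup the classes geometrically so that the $\log|\Pi_k|$ term collapses to $\log\log\log N$,'' is exactly the theorem you are trying to prove, and nothing in your setup shows how to do it. In fact the naive class sum does not stop at $k=O(\log N/\log\log N)$: the small primes $2,3,5,\dots$ live in classes with $k$ up to about $\log N/\log 2$, and once those are included the class sum climbs back to the $(\log N)^3$-range bound that this paper already proves (via Theorem~\ref{thm:weighted-multisets}) for $\kappa=N$. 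Recovering the extra $(\log\log N)^2/\log\log\log N$ savings and the constant $4/\log 2$ is the whole technical content of \cite{improved-divisibility}; it is not a bookkeeping improvement on the geometric classes, and your sketch does not supply the missing idea. If you want a proof at the level of this paper's tools, you should either state the upper bound you can actually reach with Theorem~\ref{thm:multisets} (the weaker cubic-type bound) or go to the source for the sharper result.
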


Another interesting choice of $R$ is $R=[N/\kappa,N]$, where $\kappa> 1$ is a real number and $N$ is a positive integer. Lewis and Souza noted that $\D_{[1,\kappa]}$ embeds into $\D_{[N/\kappa,N]}$, which gives a lower bound of $$\dim(\D_{[N/\kappa,N]})=\Omega((\log \kappa)^2(\log\log \kappa)^{-2}).$$

Lewis and Souza also showed that $\dim(\D_{[N/\kappa,N]})$ is bounded above by $\kappa (\log\kappa)^{1+o(1)}$, using a general result of Scott and Wood \cite{Scott-Wood-paper}. Note that this is a function of only $\kappa$ and not $N$. So, a natural question is to understand the behavior of $\sup_N\dim(\D_{[N/\kappa,N]})$ as a function of $\kappa$.
One of our main results in this paper is to significantly improve the above upper bound with the following theorem.

\begin{theorem}\label{thm:divisibility} Let $\kappa>1$ be a real number and let $N$ be a positive integer. Then we have
$$\dim(\D_{[N/\kappa,N]})\le \max\left(688\frac{(\log\kappa)^3}{(\log\log \kappa)^2},2\right).$$
\end{theorem}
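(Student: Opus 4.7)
The plan is to reduce the divisibility dimension bound to the multiset dimension bound from \cref{thm:multisets} via a careful prime-factor decomposition.

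First, I would reduce to the case of $\kappa$-smooth integers. For comparable $m \mid n$ in $[N/\kappa, N]$, the ratio $n/m$ is at most $\kappa$, and so every prime factor of $n/m$ is at most $\kappa$. Writing $n = n_L n_S$ with $n_L = \prod_{p > \kappa} p^{v_p(n)}$, we get $m_L = n_L$ for every comparable pair. Hence $\D_{[N/\kappa, N]}$ decomposes as a disjoint union, over values of $n_L$, of subposets $Q_L$, each isomorphic (via division by $n_L$) to the divisibility poset on the set of $\kappa$-smooth integers lying in some interval $[M/\kappa, M]$ with $M = N/n_L$. Since the dimension of a disjoint union of non-trivial posets is at most $\max(\dim Q_L, 2)$, the problem reduces to bounding this dimension uniformly in $M$.

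Second, I would partition the primes $p \le \kappa$ into approximately $r = \log\kappa/\log\log\kappa$ classes $C_j$ chosen so that within each class $\log p$ lies in an interval of length about $\log\log\kappa$ (for instance, $\log p \in ((j-1)\log\log\kappa, \, j\log\log\kappa]$, with the smallest primes bunched separately). Setting $\phi_j(n) = \prod_{p \in C_j} p^{v_p(n)}$, the map $n \mapsto (\phi_j(n))_j$ is an order embedding of the $\kappa$-smooth restriction into $\prod_j P_j$, where $P_j$ is the image. This yields $\dim \le \sum_j \dim(P_j)$, and each $P_j$ sits inside the multiset poset $\M^{n_j}$ with $n_j = |C_j|$ bounded via the prime number theorem. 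For each comparable pair $m \mid n$, the identity $\sum_{p \in C_j}(v_p(n) - v_p(m))\log p \le \log\kappa$ together with the lower bound on $\log p$ in $C_j$ gives a chain-local multiset size change of $O(\log\kappa/\log\log\kappa)$ per class. Plugging the parameters $\ell - k = O(\log\kappa/\log\log\kappa)$ and $\log n_j = O(\log\log\kappa)$ into \cref{thm:multisets} suggests a per-class bound of $O((\log\kappa)^2/\log\log\kappa)$, and summing over the $r$ classes gives the desired $O((\log\kappa)^3/(\log\log\kappa)^2)$.

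The main obstacle is that \cref{thm:multisets} requires the multiset sizes to lie in a single global interval $[k,\ell]$, whereas the divisibility structure only constrains the size change along chains; the global multiset sizes in $P_j$ may span a range of order $\log M / \log p$, which is unbounded in $M$. To circumvent this, I would partition each $P_j$ into size buckets of width $\Delta_j = O(\log\kappa/\log\log\kappa)$, apply \cref{thm:multisets} within each bucket, and then combine the bucket embeddings using the key structural fact that any chain in $P_j$ lies within at most two adjacent buckets. The careful combination should introduce only constant overhead per class, either through a stacking-style construction that respects the ambient multiset poset's natural product structure, or via proving a "chain-local" refinement of \cref{thm:multisets} where the theorem is applied directly with the chain-local size range. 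A secondary issue is that for the very smallest primes, the chain-local size bound is too weak; these primes can be handled separately by taking one chain coordinate per prime, contributing a lower-order term that is absorbed into the final estimate. Summing the per-class bounds across the $O(\log\kappa/\log\log\kappa)$ classes, and absorbing overheads into the constant, yields the claimed bound with some explicit constant (the paper's $688$).
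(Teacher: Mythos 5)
Your first step — decomposing $\D_{[N/\kappa,N]}$ into a disjoint union over the large-prime part $n_L$, and reducing to bounding the dimension of each $\kappa$-smooth piece — exactly matches the paper. The divergence comes next: the paper identifies each piece with a \emph{weighted} multiset poset $\M_{[k_M,\ell_M]}^{n,\vv}$ with $\vv=(\log 2,\dots,\log p_n)$ and $\ell_M-k_M=\log\kappa$, and then invokes \cref{thm:weighted-multisets} in a single stroke; the weighted theorem (in particular \cref{lem:slant-L2}) is the key new contribution precisely because it handles the varying prime weights directly. You instead try to reduce to the unweighted \cref{thm:multisets} by grouping primes into classes $C_j$ of comparable $\log p$ and projecting.

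There is a genuine gap in that reduction, and it sits exactly where you flag the ``main obstacle.'' Once you embed the poset into $\prod_j P_j$ with $P_j$ the image of the projection $\phi_j$, you must bound $\dim(P_j)$ for the \emph{full} induced order on $P_j$, which is plain multiset inclusion on $C_j$. Comparability in $P_j$ carries no size constraint at all: two elements $S\subseteq T$ of $P_j$ need not lift to a comparable pair in the divisibility poset, so $|T|-|S|$ can be as large as the entire range, which is of order $\log(N/M)/\log p$ and hence unbounded in $N$. Concretely, for any $p\in C_j$ the elements $p^0, p^1, p^2, \dots$ all lie in $P_j$ (each is $\phi_j(n)$ for a suitable $n$), and they form a single chain spanning every size bucket. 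So the claimed ``structural fact that any chain in $P_j$ lies within at most two adjacent buckets'' is false, and the bucketing/stacking step has no foundation. The size-difference bound you derive from $\sum_{p\in C_j}(v_p(n)-v_p(m))\log p\le\log\kappa$ applies only to pairs that come from a genuine divisibility relation, which is strictly weaker information than what the product bound $\dim\le\sum_j\dim(P_j)$ requires. The paper's route sidesteps this entirely: because $\vv$-size lies in a fixed window of width $\log\kappa$ for the whole piece, the weighted theorem can be applied once, globally, and the dependence on the prime sizes is absorbed into the quantity $m(\vv,r)$ rather than into a class-by-class reduction. A secondary point: even granting the bucketing, your uniform per-class estimates (using $\ell-k=O(\log\kappa/\log\log\kappa)$ and $\log n_j=O(\log\log\kappa)$ for every $j$) are not consistent — for $j$ near $r$ one has $\log n_j$ of order $\log\kappa$, not $\log\log\kappa$ — so the per-class bookkeeping would need to be redone carefully even if the structural issue were resolved.
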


The $2$ in the $\max$ function handles the case when $\kappa$ is less than $3$. To prove this theorem, we first generalize \cref{thm:multisets}. Note that $\M^n$ is isomorphic to $\Z_{\ge0}^n$, by identifying a multiset $S$ with its tuple of multiplicities $\x(S)$. Additionally, we can identify an integer with the tuple of exponents in its prime factorization. However, when we consider the size of an integer, each prime has a different weight. Thus, to use multisets to capture an interval of integers, we need to weight the elements of $[n]$ in our multisets. So, for a weight vector $\vv\in\R_{>0}^n$, we define the \emph{$\vv$-size} of a multiset $S$ to be $\abs{S}_{\vv}=\x(S)\cdot \vv$. We also define 
$$\M_{[k,\ell]}^{n,\vv}=\{S\in\M^n \st \abs{S}_{\vv} \in [k,\ell]\}.$$
Here we allow $k$ and $\ell$ to be real numbers, as $\abs{S}_{\vv}$ may not be an integer. Note that when all entries of $\vv$ are $1$ and $k,\ell$ are integers, $\M_{[k,\ell]}^{n,\vv}=\M_{[k,\ell]}^{n}$.

To state our result on weighted multisets, we first need a technical definition to describe the dependence on $\vv$. For a vector $\vv\in\R_{>0}^n$ and a real $s$, we let $m(\vv,s)$ be the sum of the least $\floor{s}$ coordinates of $\vv$. Then we have the following result for weighted multisets, which generalizes \cref{thm:multisets}.

\begin{theorem}\label{thm:weighted-multisets}
Let $0\le k<\ell$ be real numbers and let $n$ be a positive integer. Let $\vv\in \R_{>0}^n$ be a weight vector. Let $r$ be the least real number such that $m(\vv,r)\ge 2(\ell-k)$ and $r\ge1$. Then we have
$$\dim(\M_{[k,\ell]}^{n,\vv})\le 43r^2\log n.$$

\end{theorem}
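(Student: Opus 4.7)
The plan is to adapt the Brightwell--Kierstead--Kostochka--Trotter argument that underlies \cref{thm:subsets} to weighted multisets, by constructing $\Theta(r^2\log n)$ linear extensions of $\M_{[k,\ell]}^{n,\vv}$ whose intersection is the multiset inclusion order; by the standard characterization of Dushnik--Miller dimension this then yields the desired upper bound.

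First, I would recast the problem in terms of \emph{critical pairs}: it suffices that for every incomparable ordered pair $(S,T)$ in $\M_{[k,\ell]}^{n,\vv}$, at least one of our linear extensions places $T$ below $S$. For any such pair there is always a \emph{witness} index $i\in[n]$ with $\x(S)_i>\x(T)_i$. Next, I would partition $[n]$ into $\Theta(r)$ blocks of roughly balanced $\vv$-mass; the defining property of $r$---that the $r$ lightest entries of $\vv$ already sum to at least $2(\ell-k)$---is exactly what ensures that $\Theta(r)$ blocks suffice while keeping each block's mass controlled by a constant multiple of $\ell-k$. From each block partition $\B=(B_1,\ldots,B_m)$ equipped with an ordering of its blocks, I would build a linear extension by sorting multisets lexicographically on the tuple of block-restricted weighted sizes $(|S|_{\vv|_{B_1}},\ldots,|S|_{\vv|_{B_m}})$, breaking ties using a fixed total extension of $\M^n$. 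Finally, I would take $\Theta(r\log n)$ independent random partitions, and for each one use the $\Theta(r)$ linear extensions obtained by varying which block is placed first, for a total of $\Theta(r^2\log n)$ extensions.

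The main obstacle is the probabilistic estimate: show that for any fixed critical pair $(S,T)$ with witness $i$, a uniformly random partition-plus-ordering reverses $(S,T)$ with probability at least a positive constant. The difficulty is that $S$ and $T$ may differ on many coordinates, so isolating the witness in its own block is not enough: we need the block $B_j$ containing $i$ to satisfy $|S|_{\vv|_{B_j}}>|T|_{\vv|_{B_j}}$, which can fail when other coordinates of $B_j$ contribute more mass to $T$ than to $S$. Controlling this tension requires using both the upper and lower bounds on per-block mass that come from the choice of $r$, and most likely a two-sided tail estimate treating separately the contribution of the witness and of the remaining coordinates. A secondary subtlety is keeping the union bound down to $\log n$: although $|\M_{[k,\ell]}^{n,\vv}|$ can be superpolynomial in $n$, each critical pair is essentially determined by its witness $i\in[n]$ together with a small number of further discrete parameters (such as the difference $\x(S)_i-\x(T)_i$, bounded above by $\lfloor(\ell-k)/v_i\rfloor$), so there are only polynomially many "types" of critical pairs to be distinguished, and $\Theta(\log n)$ independent repetitions suffice.
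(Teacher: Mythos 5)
Your high-level plan (adapt the BKKT random-partition machinery, union-bound over $\Theta(r\log n)$ repetitions, get $\Theta(r^2\log n)$ extensions) is pointed in the right direction, but the proposal leaves the genuinely new step unresolved and mischaracterizes the structure of the argument.

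First, a single family of "sort lexicographically on block-restricted weighted sizes" extensions is not enough; the paper has to split into two regimes. When $\dist{T\setminus S}$ is large (more than $3r$), a partition into $\Theta(r)$ blocks cannot hope to isolate a witness coordinate of $S\setminus T$ from all of $T\setminus S$, and the lexicographic-first-block idea provides no leverage. That regime is handled by an entirely different pigeonhole: using a $(3r,3r,r,3\log n,n)$-good function, one finds a partition in which $T\setminus S$ meets more than $r$ parts, and then argues that $T$ cannot have a "significant" weighted excess over $S$ in all of those parts, because otherwise $\abs{T}_{\vv}-\abs{S}_{\vv}$ would exceed $\tfrac12 m(\vv,r)\ge \ell-k$. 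The complementary regime $\dist{T\setminus S}\le 3r$ is handled by a separate set of lexicographic extensions built from $\Theta(r^2\log n)$ random total orders of $[n]$ (this is where the $r^2$ really comes from, not from crossing $\Theta(r)$ block choices with $\Theta(r\log n)$ partitions). Your proposal collapses these two mechanisms into one and, as you yourself note, the resulting probabilistic claim ("a uniformly random partition reverses any fixed critical pair with constant probability") is not established and is very likely false for pairs with large $\dist{T\setminus S}$.

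Second, and more importantly, the difficulty you flag — that the block $B_j$ containing the witness may have $\abs{T}_{\vv|_{B_j}}>\abs{S}_{\vv|_{B_j}}$ by a tiny positive amount — is precisely the obstacle that the paper's new idea is designed to overcome, and your proposal does not resolve it. The paper introduces, for each block, \emph{two} auxiliary total orders $K_{\alpha,\tau,1},K_{\alpha,\tau,2}$ on $\R$ obtained from two offset decompositions of $\R$ into half-open intervals of length $\varepsilon(\alpha,\tau)=\min_{i\in R_{\alpha,\tau}}v_i$, each ordering intervals upward but reversing within an interval. These "slant" orders still extend inclusion (because adding an element of the block moves $\abs{\cdot}_{\vv}$ by at least $\varepsilon$, hence across an interval boundary), yet at least one of them places $S$ above $T$ whenever $0<\abs{T_{\alpha,\tau}}_{\vv}-\abs{S_{\alpha,\tau}}_{\vv}<\varepsilon(\alpha,\tau)/2$. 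That is exactly the "small positive excess" case you identify; without the slant trick (or an equivalent), comparing block sizes directly simply cannot produce a valid linear extension that reverses such pairs. Finally, the claim that critical pairs fall into polynomially many "types" indexed by the witness is not justified and does not match the actual union bound, which is over $n^{\Theta(r)}$ objects (subsets of $[n]$ of size $\Theta(r)$), forcing $\Theta(r\log n)$ or $\Theta(r^2\log n)$ repetitions depending on the lemma.
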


The key new idea in this paper is a way to deal with arbitrary weight vectors $\vv$ in the proof of \cref{thm:weighted-multisets}. We will prove \cref{thm:divisibility} by applying \cref{thm:weighted-multisets}.

\cref{thm:weighted-multisets} can also be used to analyze other divisibility orders. For example, consider divisibility orders on polynomials. For a prime power $q$ and integers $d_0\ge0$, $\delta>0$, let $P(\F_q)_{[d_0-\delta,d_0]}$ denote the poset on monic polynomials in $\F_q[x]$ with degree in the interval $[d_0-\delta,d_0]$, ordered by divisibility. Using \cref{thm:weighted-multisets}, we obtain the following result.

\begin{theorem}\label{thm:polynomial} Let $q$ be a prime power and let $\delta$ be a positive integer. Then for each nonnegative integer $d_0$, we have

$$\dim(P(\F_q)_{[d_0-\delta,d_0]})\le \min\left(910\frac{(\delta\log q)^3}{(\log \delta)^2},172\delta^3\log q\right).$$
\end{theorem}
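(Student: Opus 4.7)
The plan is to reduce $P(\F_q)_{[d_0-\delta,d_0]}$ to a weighted multiset poset with a controlled number of generators by separating ``small'' (degree $\leq \delta$) and ``large'' (degree $>\delta$) irreducible factors, and then apply \cref{thm:weighted-multisets}. The key observation is that if $p \mid q$ with both $p, q \in P(\F_q)_{[d_0-\delta,d_0]}$ and $\pi$ is an irreducible with $\deg \pi > \delta$, then $v_\pi(p) = v_\pi(q)$: otherwise $\pi$ would divide $q/p$, forcing $\deg(q/p) \geq \deg \pi > \delta$ and contradicting $\deg q - \deg p \leq \delta$. Consequently, the poset decomposes as a disjoint union $\bigsqcup_B P^B$, where $B$ ranges over monic polynomials all of whose irreducible factors have degree $>\delta$, and $P^B$ is the set of $p$ with $v_\pi(p)=v_\pi(B)$ for every $\pi$ of degree $>\delta$; elements of different $P^B$ are pairwise incomparable. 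Each $P^B$ is isomorphic, via $p \mapsto p/B$, to the set of monic polynomials $s$ whose irreducible factors all have degree $\leq \delta$ and with $\deg s$ lying in the length-at-most-$\delta$ interval $I_B := [\max(0, d_0-\delta-\deg B),\ d_0 - \deg B]$, ordered by divisibility.

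Next I would embed $P^B$ into $\M^{n_s,\vv_s}_{I_B}$, where $n_s$ counts the monic irreducibles of $\F_q[x]$ of degree $\leq \delta$ and $\vv_s$ lists their degrees. Using the standard bound of at most $q^d/d$ monic irreducibles of degree $d$, we get $n_s \leq 2q^\delta$, so $\log n_s = O(\delta \log q)$. Applying \cref{thm:weighted-multisets} with $\ell-k \leq \delta$ yields $\dim(P^B) \leq 43 r^2 \log n_s$, where $r\geq1$ is the least real satisfying $m(\vv_s, r) \geq 2\delta$.

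The two bounds in the theorem arise from two estimates of $r$. For $172\delta^3\log q$, the trivial choice $r \leq 2\delta$ (valid since every weight is $\geq 1$) gives $43(2\delta)^2 \cdot O(\delta\log q) = O(\delta^3 \log q)$. For $910(\delta\log q)^3/(\log \delta)^2$, I would exploit the finer structure of $\vv_s$: since $\F_q[x]$ has roughly $q^d/d$ monic irreducibles of each degree $d$, choosing $d^* \approx \log_q(2\delta)$ makes the total weight of irreducibles of degree $\leq d^*$ exceed $2\delta$, while the total count of such irreducibles is $O(\delta/d^*) = O(\delta\log q/\log \delta)$. Hence $r = O(\delta\log q/\log \delta)$ and $43 r^2 \log n_s = O((\delta\log q)^3/(\log \delta)^2)$. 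Finally I would combine the bounds across $B$ using the standard fact that $\dim(\bigsqcup_i P_i) \leq \max(\max_i \dim P_i, 2)$.

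The main obstacle, beyond routine arithmetic to match the explicit constants $910$ and $172$, is handling edge cases: small $\delta$ (where $\log \delta$ may be zero), small $q$ (where the count of low-degree irreducibles changes the optimal $d^*$), or the regime where $2\delta$ is already attained by only a few degree-$1$ weights. These cases should be absorbed by the $\min$ in the theorem statement together with the $r \geq 1$ requirement.
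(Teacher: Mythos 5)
Your proposal follows essentially the same route as the paper: decompose $P(\F_q)_{[d_0-\delta,d_0]}$ into a disjoint union over "large-prime parts" $B$, identify each piece with a weighted multiset poset $\M^{n,\vv}_{[k,\ell]}$ where $\vv$ records the degrees of irreducibles of degree at most $\delta$, bound $n$ by a power of $q^\delta$, and apply \cref{thm:weighted-multisets} with the two choices $r=O(\delta\log q/\log\delta)$ and $r=2\delta$. The paper's \cref{app:polynomial-bash} makes your heuristic ``choose $d^*\approx\log_q(2\delta)$'' precise via the inequality $m(\vv,r)\ge d\floor{r}-\sum_{i<d}(d-i)n_i$ with $n_i\le q^i/i$, avoiding any need for lower bounds on $n_i$; that is the routine arithmetic you deferred, and it works out as you anticipated.
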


The two bounds in this theorem correspond to the two regimes when $q<\delta$ and $q\ge \delta$.

The paper is organized as follows. In \cref{sec:prelimaries} we will discuss some basic properties of poset dimension that will be useful in the proofs of the main results. In \cref{sec:multisets} we analyze unweighted multiset posets and prove \cref{thm:multisets}. In \cref{sec:weighted-multisets} we extend our arguemnts to weighted multiset posets and prove \cref{thm:weighted-multisets}. In \cref{sec:divisibility} we prove \cref{thm:divisibility} by applying \cref{thm:weighted-multisets}. In \cref{sec:polynomial}, we discuss other divisibility orders and prove \cref{thm:polynomial}. In \cref{sec:further-directions} we discuss further directions.

\section{Preliminaries}\label{sec:prelimaries}

A \emph{linear extension} of a poset $P$ is a total (linear) order on the elements of $P$ that agrees with all relations in $P$. Given a set $\LL$ of linear extensions of $P$, we say that $\LL$ is a \emph{realiser} if for each pair\footnote{We will refer to ordered pairs as pairs when the order is clear.} of incomparable elements ($x,y$), there exists an extension $L\in\LL$ such that $x\ge_L y$. Then $\dim(P)$ is also the minimum size of a realiser of $P$. One way to see this is by embedding $P$ into $\R^d$ and constructing a linear extension from each of the $d$ coordinates.

We now discuss some basic properties of dimension. Given two posets $P$ and $Q$, if $P$ embeds into $Q$, then $\dim(P)\le \dim(Q)$. 

An important concept in the study of poset dimensions is the notion of critical pairs. A \emph{critical pair} in a poset $P$ is a pair of incomparable elements $(x,y)$ such that $x$ is minimal among elements incomparable to $y$ and $y$ is maximal among elements incomparable to $x$. We say that a linear extension $L$ of $P$ \emph{reverses} a critical pair $(x,y)$ in $P$ if $x\ge_Ly$.

\begin{proposition}
Let $\LL$ be a set of linear extensions of a (finite) poset $P$. Suppose that each critical pair in $P$ is reversed by some extension in $\LL$. Then $\LL$ is a realiser of $P$.
\end{proposition}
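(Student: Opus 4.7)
The goal is to show that reversing every critical pair automatically reverses every incomparable pair. So the plan is: given an arbitrary incomparable pair $(x,y)$ in $P$, produce a critical pair $(x',y')$ with $x' \le_P x$ and $y' \ge_P y$, and then exploit the fact that any linear extension reversing $(x',y')$ must also reverse $(x,y)$.

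Concretely, I would first construct $x'$ as follows. Consider the set $A = \{z \in P : z \le_P x \text{ and } z \text{ is incomparable to } y\}$. This set is nonempty (it contains $x$) and finite, so pick $x'$ to be a $\le_P$-minimal element of $A$. The key subclaim is that $x'$ is then minimal \emph{in all of $P$} among elements incomparable to $y$: if $z <_P x'$ with $z$ incomparable to $y$, then $z <_P x$ too, so $z \in A$, contradicting minimality of $x'$ in $A$. Next, starting from $x'$ rather than $x$, build $y'$ symmetrically: take $y'$ to be a maximal element of $B = \{z \in P : z \ge_P y \text{ and } z \text{ is incomparable to } x'\}$ (nonempty since $y \in B$ by construction of $x'$), and check by the same argument that $y'$ is maximal in all of $P$ among elements incomparable to $x'$. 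Thus $(x',y')$ is a critical pair.

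By hypothesis some $L \in \LL$ reverses $(x',y')$, i.e.\ $x' \ge_L y'$. Since $L$ is a linear extension we have $x \ge_L x'$ (because $x' \le_P x$) and $y' \ge_L y$ (because $y' \ge_P y$), so chaining gives $x \ge_L y$, as required. Running this argument over every incomparable pair $(x,y)$ shows $\LL$ is a realiser.

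The step I would expect to require the most care is the verification that the locally-minimal $x'$ chosen inside $A$ is actually globally minimal among elements of $P$ incomparable to $y$ (and analogously for $y'$). This is the only place where one must use both the restriction $z \le_P x$ in the definition of $A$ and the transitivity of $\le_P$; once this subclaim is in hand, the rest of the argument is a single application of the critical-pair hypothesis and the fact that linear extensions respect the order.
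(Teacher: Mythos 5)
Your overall strategy is the same as the paper's: dominate the given incomparable pair $(x,y)$ by a critical pair $(x',y')$ with $x'\le_P x$ and $y'\ge_P y$, then let the hypothesis reverse $(x',y')$ and conclude $x\ge_L y$ by transitivity. The paper reaches $(x',y')$ by iteratively decreasing $x$ or increasing $y$ while preserving incomparability, invoking finiteness for termination; you instead construct $x'$ and $y'$ in one shot as extremal elements of the sets $A$ and $B$. That is a reasonable variant, and your subclaim that a minimal element of $A$ is in fact minimal in all of $P$ among elements incomparable to $y$ is correct as argued.

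But there is a real gap at the final step. You have established that $x'$ is minimal among elements incomparable to \emph{$y$}, and that $y'$ is maximal among elements incomparable to $x'$, and then you write ``Thus $(x',y')$ is a critical pair.'' The definition of critical pair, however, requires $x'$ to be minimal among elements incomparable to \emph{$y'$}. Since $y'$ may be strictly larger than $y$, these are genuinely different conditions, and the conclusion does not follow from what you proved. The missing fact is true, but you must argue it: suppose $z<_P x'$ with $z$ incomparable to $y'$. If $z$ is incomparable to $y$, then $z$ contradicts the minimality of $x'$ that you already established. Otherwise $z$ is comparable to $y$: if $z\le_P y$, then $z\le_P y\le_P y'$ contradicts the incomparability of $z$ and $y'$; if $z\ge_P y$, then $y\le_P z<_P x'$ forces $y<_P x'$, contradicting the incomparability of $x'$ and $y$ (which holds since $x'\in A$). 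Inserting this short case analysis repairs the proof; without it, the assertion that $(x',y')$ is a critical pair is unjustified.
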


\begin{proof}
Let $(x,y)$ be any pair of incomparable elements in $P$. As long as $(x,y)$ is not a critical pair, we can decrease $x$ or increase $y$ while keeping the pair incomparable. Since $P$ is finite, this means we can find a critical pair $(x',y')$ such that $x\ge_P x'$ and $y\le_P y'$. Now there exists some $L\in\LL$ such that $x'\ge_Ly'$, so we also have $x\ge_Ly$ as desired.
\end{proof}

Using the concept of a critical pair, we can see why we only care about the lowest and highest layers of a suborder of $\Q^n$. Let $0<k<\ell<n$ and consider the poset $\Q^n_{[k,\ell]}$. In this poset the critical pairs are precisely the incomparable pairs of subsets $(X,Y)$ where $\abs{X}=k$ and $\abs{Y}=\ell$. So to construct a realiser for $\Q^n_{[k,\ell]}$ it suffices to construct a realiser for the suborder $\Q^n_{\{k,\ell\}}$. Thus we have that $\dim(\Q^n_{[k,\ell]})\le\dim(\Q^n_{\{k,\ell\}})$. Additionally, since $\Q^n_{\{k,\ell\}}$ is a suborder of $\Q^n_{[k,\ell]}$, we have that $\dim(\Q^n_{\{k,\ell\}})\le\dim(\Q^n_{[k,\ell]})$. Thus $\dim(\Q^n_{\{k,\ell\}})=\dim(\Q^n_{[k,\ell]})$. 

Critical pairs also explain why $\dim(\Q^n)=n$. The critical pairs in $Q^n$ are precisely the pairs $(\{x\},[n]\setminus\{x\})$ for $x\in [n]$. There are $n$ such critical pairs and a linear extension of $\Q^n$ can reverse at most one of them. Thus $\dim(\Q^n)\ge n$. We already know that $\dim(\Q^n)\le n$, so $\dim(\Q^n)=n$.

Next we consider some basic poset constructions. Given posets $P$ and $Q$, recall the product poset $P\times Q$, where $(p_1,q_1)\le (p_2,q_2)$ if and only if $p_1\le_Pp_2$ and $q_1\le_Qq_2$. Then we have that $\dim(P\times Q)\le \dim(P)+\dim(Q)$ by combining embeddings of $P$ and $Q$ into an embedding of $P\times Q$.

Another poset construction we will use is the disjoint union of two posets. Given posets $P$ and $Q$, let $P\sqcup Q$ be their disjoint union. Here we take the union of a copy of $P$ and a copy of $Q$ with all elements of $P$ being incomparable with all elements of $Q$. As long as $P$ and $Q$ are nonempty, $P\sqcup Q$ cannot be a chain, so $\dim(P\sqcup Q)\ge2$. Next, let $d=\max(\dim(P),\dim(Q),2)$ and note that both $P$ and $Q$ can be embedded into $\R^d$. By appropriately translating these embeddings, we can construct an embedding of $P\sqcup Q$ into $\R^d$. So $\dim(P\sqcup Q)=\max(\dim(P),\dim(Q),2)$.

Next, to illustrate some of the techniques used in the proof of \cref{thm:divisibility}, we will prove the following easier result, which is already a small improvement upon the previous best known bound.

\begin{proposition}\label{prop:easy-divisibility} Let $\kappa>1$ and $N\in \N$. Let $\pi(\kappa)$ denote the number of primes less than or equal to $\kappa$. Then we have
$$\dim(\D_{[N/\kappa,N]})\le \max(\pi(\kappa),2)=(1+o(1))\kappa(\log \kappa)^{-1}.$$
\end{proposition}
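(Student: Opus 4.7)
The plan is to exploit a single key observation: whenever $a \mid b$ with $a,b \in [N/\kappa, N]$, the quotient $b/a$ lies in $[1, \kappa]$, and therefore every prime divisor of $b/a$ is at most $\kappa$. Accordingly, I would split each integer's prime factorization as $n = n_s \cdot n_\ell$, where $n_s = \prod_{p \le \kappa} p^{v_p(n)}$ is the ``small part'' and $n_\ell = \prod_{p > \kappa} p^{v_p(n)}$ is the ``large part''. If $a \mid b$ inside the interval, then since $\gcd(a_\ell, b_s) = 1$ we get $a_\ell \mid b_\ell$, and the quotient $b_\ell/a_\ell$ is a product of primes all exceeding $\kappa$; if it were greater than $1$ it would already exceed $\kappa$, contradicting $b/a \le \kappa$. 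Hence $a \mid b$ in the interval forces $a_\ell = b_\ell$.

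Consequently, integers with different large parts are incomparable in $\D_{[N/\kappa,N]}$, and the poset decomposes as the disjoint union $\bigsqcup_L P_L$ over the realized values $L$ of the large part, where $P_L = \{n \in [N/\kappa, N] : n_\ell = L\}$. Within each nonempty $P_L$, divisibility is determined solely by the exponents of the $\pi(\kappa)$ primes $p_1, \dots, p_{\pi(\kappa)}$ that are at most $\kappa$, so the map $n \mapsto (v_{p_1}(n), \dots, v_{p_{\pi(\kappa)}}(n))$ is an order embedding of $P_L$ into a product of $\pi(\kappa)$ finite chains, giving $\dim(P_L) \le \pi(\kappa)$.

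Iterating the disjoint-union bound $\dim(P \sqcup Q) \le \max(\dim(P), \dim(Q), 2)$ recorded earlier in this section then yields $\dim(\D_{[N/\kappa, N]}) \le \max(\pi(\kappa), 2)$, and the asymptotic $\pi(\kappa) = (1+o(1)) \kappa/\log\kappa$ is supplied by the prime number theorem. There is no real obstacle once the ``large part is preserved under comparability'' observation is in hand; the $\max$ with $2$ is only needed in the degenerate range $\kappa < 2$, where $\pi(\kappa) = 0$ but the poset is automatically an antichain because no element has a proper multiple in the interval.
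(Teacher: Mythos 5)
Your proposal is correct and is essentially the paper's own proof. Your ``large part'' $n_\ell$ is precisely the paper's $M \in K$ (an integer with no small prime divisors), your block $P_L$ is the paper's $g(M) \cap [N/\kappa, N]$, and both arguments then embed each block into $\R^{\pi(\kappa)}$ via small-prime exponents and invoke the disjoint-union bound together with the prime number theorem.
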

\begin{proof}
The key idea is to write $\D_{[N/\kappa,N]}$ as a disjoint union of several posets.

Call a prime $p$ \emph{small} if $p\le \kappa$ and \emph{large} if $p>\kappa$. Let $K$ be the set of all positive integers $M\le N$ with no small primes dividing $M$. For each $M\in K$, let $g(M)$ be the set of all positive integers of the form $Mq$, where $q$ has only small prime divisors. Now, note that we have a set partition of $[N/\kappa,N]$ given by $$[N/\kappa,N]=\bigsqcup_{M\in K} g(M)\cap [N/\kappa,N].$$
Furthermore, elements of different sets in this partition are incomparable in $\D_{[N/\kappa,N]}$, because if they
were comparable, their ratio would be divisible by a large prime. Thus $\D_{[N/\kappa,N]}$ is a disjoint union of posets of the form $\D_{g(M)\cap [N/\kappa,N]}$, and
it suffices to show that $\dim(\D_{g(M)\cap [N/\kappa,N]})\le \pi(\kappa)$ for any $M\in K$.

For any given $M\in K$, we can embed $\D_{g(M)\cap [N/\kappa,N]}$ into $\R^{\pi(\kappa)}$ by mapping an integer $m$ to its tuple of exponents of small primes. Applying the prime number theorem completes the proof.
\end{proof}

The main result of this paper is to provide a nontrivial bound on $\dim(\D_{g(M)\cap [N/\kappa,N]})$ using new results on multisets. We first review some notation for multisets.

When referring to multisets, we always count size/cardinality with multiplicity. We denote the set of distinct elements of a multiset $S$ by $\supp(S)$, so that $\dist{S}$ is the number of distinct elements of $S$. For multisets $S$ and $T$, we use $S\setminus T$ to denote the multiset where we subtract the multiplicity of an element in $T$ from its multiplicity in $S$ (without going below $0$). For example, $\supp(S\setminus T)$ is the set of elements which have greater multiplicity in $S$ than in $T$. Also, we write $S\subseteq T$ if the multiplicity of each element in $S$ is at most its multiplicity in $T$.

\section{Results on Multisets}\label{sec:multisets}

In this section we prove \cref{thm:multisets}, which is a modification of the main result of Brightwell et al. \cite{BKKT-paper}.

To prove this theorem, we will construct two sets of linear extensions of $\M_{[k,\ell]}^n$. The first set of extensions, $\LL_1$, will deal with all incomparable pairs of multisets $(S,T)$ for which $\dist{T\setminus S}\le 3(\ell-k).$ The second set of extensions, $\LL_2$, will deal with all incomparable pairs of multisets $(S,T)$ for which $\dist{T\setminus S}>3(\ell-k).$

\begin{lemma}\label{lem:L1}
Let $n$ be a positive integer and let $r$ be a positive real number with $r\ge 1$. Then there exists a set $\LL_1$ of at most $(3r+1)^2\log n$ linear extensions of $\M^n$ such that for every incomparable pair of multisets $(S,T)$ with $\dist{T\setminus S}\le 3r,$ there exists an extension $L\in\LL_1$ where $S>_LT$.
\end{lemma}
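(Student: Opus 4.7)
The plan is to attach one linear extension to each subset $A\subseteq[n]$, and then choose a small family of $A$'s whose corresponding extensions jointly reverse every pair we need.

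For a subset $A\subseteq[n]$ and a multiset $S\in\M^n$, write $\abs{S}_A$ for the total multiplicity in $S$ of the elements of $A$, that is $\sum_{i\in A}\x(S)_i$. I would define $L_A$ by ordering $\M^n$ first in increasing order of $\abs{\cdot}_A$ and then breaking ties using a fixed linear extension of multiset inclusion, for example lexicographic order on the multiplicity vector. Since $S\subseteq T$ implies both $\abs{S}_A\le\abs{T}_A$ and $\x(S)\le\x(T)$ coordinatewise, $L_A$ is a valid linear extension of $\M^n$.

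The key reversal criterion is the following: $L_A$ places $S>_{L_A}T$ whenever $A\cap\supp(T\setminus S)=\emptyset$ and $A\cap\supp(S\setminus T)\ne\emptyset$. This follows from the identity $\abs{S}_A-\abs{T}_A=\abs{S\setminus T}_A-\abs{T\setminus S}_A$; under these hypotheses the first term is strictly positive and the second vanishes. Using that an incomparable pair forces $\supp(S\setminus T)\ne\emptyset$, the lemma reduces to finding a family $\mathcal{A}\subseteq 2^{[n]}$ of size at most $(3r+1)^2\log n$ such that for every $B\subseteq[n]$ with $\abs{B}\le 3r$ and every $j\in[n]\setminus B$, some $A\in\mathcal{A}$ satisfies $j\in A$ and $A\cap B=\emptyset$. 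For any incomparable pair $(S,T)$ with $\dist{T\setminus S}\le 3r$, I would apply this with $B=\supp(T\setminus S)$ and any $j\in\supp(S\setminus T)$.

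For $\mathcal{A}$ I would use the probabilistic method. Sample $m$ subsets of $[n]$ independently, with each element of $[n]$ placed into each sample independently with probability $p=1/(3r+1)$. The probability that a single sample is good for a fixed pair $(B,j)$ is at least $p(1-p)^{3r}\ge 1/(e(3r+1))$ by the standard estimate $(1-1/(3r+1))^{3r}\ge 1/e$. Since there are at most $n\binom{n}{3r}\le n^{3r+1}$ such pairs $(B,j)$, a union bound shows that $m=O(r^2\log n)$ samples suffice with positive probability. The main obstacle I anticipate is tightening the constant from the $e(3r+1)^2$ produced by this naive union bound down to the claimed $(3r+1)^2$; this can be addressed either by a sharper analysis, by a deterministic cover-free family construction, or by absorbing the extra factor into the unoptimized constant $34$ of \cref{thm:multisets}, which the paper's footnote explicitly disclaims optimizing.
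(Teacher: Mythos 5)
Your proof is correct and achieves the conclusion of the lemma up to a constant factor; it takes a genuinely different route from the paper's. The paper's linear extensions are lexicographic: for a total order $\sigma$ on $[n]$, it sets $S>_{L_\sigma}T$ when $\max_\sigma\bigl(\supp(S\setminus T)\cup\supp(T\setminus S)\bigr)\in\supp(S\setminus T)$. It then reduces to the covering problem of finding $d$ orders $\sigma_1,\dots,\sigma_d$ so that every pair $(x,Y)$ with $\abs{Y}=3r$ and $x\notin Y$ has $x>_{\sigma_i}Y$ for some $i$; for a uniformly random $\sigma$ this event has probability exactly $\frac{1}{3r+1}$, and the union bound gives $d=(3r+1)^2\log n$ on the nose. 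Your construction replaces $\sigma$ by a subset $A\subseteq[n]$ and orders by the $A$-restricted multiplicity count $\abs{\cdot}_A$; the corresponding covering problem is essentially the same, but the success probability per trial is only $p(1-p)^{3r}\ge\frac{1}{e(3r+1)}$, so the naive union bound costs an extra factor of $e$. That loss is real for the lemma as literally stated, but as you correctly observe it only inflates the unoptimized constant in \cref{thm:multisets}; everything else in your argument (that $L_A$ is a linear extension, the identity $\abs{S}_A-\abs{T}_A=\abs{S\setminus T}_A-\abs{T\setminus S}_A$, and the reduction to the cover-free-family problem via $B=\supp(T\setminus S)$ and $j\in\supp(S\setminus T)$) is sound. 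The moral of the comparison is that a random lexicographic order is a strictly more efficient gadget than a random Bernoulli subset for this covering task, since it gives the clean $\frac{1}{3r+1}$ without the $1/e$ loss.
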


\begin{proof}

The proof uses the probabilistic method, with an argument similar to the proof of \cref{thm:weak-subsets} in \cite{Furedi-Khan-paper}.

Note that the result is clear if $n\le 3r+1$, so we will assume $n>3r+1.$

For a total order $\sigma$ on $[n]$, we define the lexicographic linear extension $L_\sigma$ of $\M^n$. Specifically, for $S,T\in \M^n$, we have $S>_{L_\sigma}T$ if $\max_\sigma(\supp(S\setminus T) \cup \supp(T\setminus S)) \in \supp(S\setminus T)$.



For a subset $Y$ of $[n]$ and an element $x\in[n]\setminus Y$, we say $x>_{\sigma}Y$ if $x>_{\sigma}y$ for all $y\in Y$. We would like to construct a set of at most $d=(3r+1)^2\log n$ choices of $\sigma$ such that for every $3r$-element subset $Y\subseteq[n]$ and $x\in[n]\setminus Y$, one of our $d$ choices of $\sigma$ gives $x>_{\sigma}Y$.

To accomplish this, consider sampling $d$ choices of $\sigma$ uniformly and independently at random from all possible total orders. For a given $3r$-element subset $Y\subseteq[n]$ and $x\in[n]\setminus Y$, the probability that $x>_{\sigma}Y$ in a given sample is $\frac{1}{3r+1}$. Thus, in $d$ samples, the probability that $x\not>_{\sigma}Y$ for any of the sampled $\sigma$ is $\left(1-\frac{1}{3r+1}\right)^d.$ The number of possible choices for $(x,Y)$ is at most $n^{3r+1}$. Thus the expected number of pairs $(x,Y)$ for which $x\not>_{\sigma}Y$ for all $d$ choices of $\sigma$ is at most $$n^{3r+1}\left(1-\frac{1}{3r+1}\right)^d<n^{3r+1}e^{-d/(3r+1)}=1 .$$
So, we can choose $d$ total orders $\sigma_1,\dots,\sigma_d$ such that for every choice of $(x,Y)$, $x>_{\sigma_i}Y$ for some chosen $\sigma_i$. Now, let $\LL_1=\{L_{\sigma_i}\colon 1\le i\le d\}$ be the set of lexicographic linear extensions corresponding to these choices of $\sigma$.



Consider an incomparable pair of multisets $(S,T)$ in $\M^n$ with $\dist{T\setminus S}\le 3r$. Choose any $x\in \supp(S\setminus T)$ and any $3r$-element subset $Y$ of $[n]\setminus\{x\}$ containing $\supp(T\setminus S)$. Then we have that $x>_{\sigma_i}Y$ for some chosen $\sigma_i$. In the corresponding linear extension $L_{\sigma_i}$, we will have $S>_{L_{\sigma_i}}T$, as desired.
\end{proof}

Note that \cref{lem:L1} holds for any value of $r$, but we will apply it with $r=\ell-k$. Additionally, note that since $\M_{[k,\ell]}^n$ is a suborder of $\M^n$, we can restrict the linear extensions we obtain from \cref{lem:L1} to linear extensions of $\M_{[k,\ell]}^n$ with the same property.

We construct $\LL_2$ from (ordered) partitions of $[n]$. We can represent a partition of $[n]$ with $a$ parts by a function $f\colon [n]\to [a]$. More generally, we represent a sequence of $t$ such partitions by a function $f\colon [n]\times[t]\to [a]$.

Working with these partitions instead of $[n]$ allows us to have a smaller object to deal with. However, we also need our sequence of partitions to remember some of the structure of $[n]$. This is encoded with the following property.

\begin{definition}[\cite{BKKT-paper}]
A function $f\colon [n]\times[t]\to [a]$ is $(a,b,r,t,n)$-good if for each subset $X\subseteq [n]$ of size $\abs{X}=b$, there exists $\tau\in[t]$ such that $\abs{f(X,\tau)}>r$.
\end{definition}

In other words, for each subset $X$ of $[n]$ with $b$ elements, we need to be able to choose one of our $t$ partitions that divides $X$ into more than $r$ parts. The following lemma guarantees the existence of good functions for appropriately chosen parameters.

\begin{lemma}[\cite{BKKT-paper}]\label{lem:good-func}
Let $a,b,r,t,n$ be positive integers such that $r<b\le n$ and $r<a$. If  $$\binom{n}{b}e^{rt}(r/a)^{(b-r)t}<1,$$ then there exists an $(a,b,r,t,n)$-good function.
\end{lemma}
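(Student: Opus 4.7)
The plan is a direct probabilistic-method argument, in the same spirit as the proof of \cref{lem:L1}. I would sample $f\colon[n]\times[t]\to[a]$ uniformly at random, letting each value $f(i,\tau)$ be independent and uniform in $[a]$, and then show that under the stated hypothesis the probability that $f$ fails to be $(a,b,r,t,n)$-good is strictly less than $1$. That immediately yields a deterministic good $f$.

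The key quantitative step is to bound, for a fixed $b$-subset $X\subseteq[n]$ and a fixed $\tau\in[t]$, the probability that $|f(X,\tau)|\le r$. This event says that the image $f(X,\tau)$ is contained in some $r$-element subset of $[a]$. A union bound over the $\binom{a}{r}$ choices of such a subset, together with the standard estimate $\binom{a}{r}\le(ea/r)^r$ (valid since $r<a$), gives
\[
\Pr\bigl(|f(X,\tau)|\le r\bigr)\le \binom{a}{r}\left(\frac{r}{a}\right)^{b}\le \left(\frac{ea}{r}\right)^{r}\!\left(\frac{r}{a}\right)^{b}=e^{r}\left(\frac{r}{a}\right)^{b-r}.
\]
Since the restrictions of $f$ to the $t$ disjoint slices $[n]\times\{\tau\}$ are mutually independent, the probability that $|f(X,\tau)|\le r$ holds for \emph{every} $\tau\in[t]$ is at most $\bigl[e^{r}(r/a)^{b-r}\bigr]^{t}=e^{rt}(r/a)^{(b-r)t}$.

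Finally, a union bound over the $\binom{n}{b}$ choices of $X$ shows that the probability $f$ is not $(a,b,r,t,n)$-good is at most $\binom{n}{b}\,e^{rt}(r/a)^{(b-r)t}$, which is less than $1$ by hypothesis. Hence a good $f$ exists.

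I do not anticipate any real obstacle: the hypothesis has been written so as to match precisely the inequality produced by this union bound, and the estimate $\binom{a}{r}\le (ea/r)^r$ is exactly the device that converts the direct $\binom{a}{r}(r/a)^b$ bound into the $e^{r}(r/a)^{b-r}$ form appearing in the hypothesis. The conditions $r<b\le n$ and $r<a$ ensure all the quantities involved are meaningful and the bounds are nontrivial.
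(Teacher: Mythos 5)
Your proof is correct and takes the same approach the paper describes: the paper states only that the proof is by the probabilistic method with $f$ chosen uniformly at random, deferring to \cite{BKKT-paper} for details, and your argument fills in exactly those details (union bound over $r$-subsets of $[a]$, the estimate $\binom{a}{r}\le(ea/r)^r$, independence across the $t$ slices, and a final union bound over $b$-subsets of $[n]$). No gaps.
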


The proof of this lemma is by the probabilistic method; we choose $f$ uniformly at random. See \cite[Lemma 2.2]{BKKT-paper} for a full proof. 

We are now ready to construct $\LL_2$. The key idea is to focus on a subset $R\subseteq[n]$ (which we will choose later) and use it to order our multisets by counting only elements of $R$ (with multiplicity). The goal is to find a collection $\mathcal{R}$ of several different subsets of $[n]$ such that for any incomparable pair of multisets $(S,T)$ with $\supp(T\setminus S)> 3(\ell-k)$, there exists an $R\in \mathcal{R}$ inducing an order with $S>T$. We accomplish this by taking several partitions of $[n]$, corresponding to a good function given by \cref{lem:good-func}. The fact that $T$ can have at most $\ell-k$ more elements than $S$ (since $k\le \abs{S},\abs{T}\le \ell$) will allow us to find a suitable collection $\mathcal{R}$ of relatively small size.

We will find $\mathcal{R}$ using a good function. Specifically, for an $(a,b,r,t,n)$-good function $f$ and $\alpha\in[a],\tau\in[t]$, we let $R_{\alpha,\tau}=\{i\in [n] \mid f(i,\tau)=\alpha\}$. That is, $R_{\alpha,\tau}$ denotes part $\alpha$ of partition $\tau$ in the sequence of partitions represented by $f$. We then let $\mathcal{R}=\{R_{\alpha,\tau} \mid \alpha\in[a],\tau\in[t]\}$.

\begin{lemma}\label{lem:L2}
Let $n,k,\ell,r$ be positive integers with $k<\ell$ and $r=\ell-k$. Then there exists a set $\LL_2$ of at most $18r\log n$ linear extensions of $\M_{[k,\ell]}^n$ such that for every incomparable pair of multisets $(S,T)$ with $\dist{T\setminus S}> 3r,$ there exists an extension $L\in\LL_2$ where $S>_LT$.
\end{lemma}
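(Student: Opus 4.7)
The plan is to apply \cref{lem:good-func} with $b = 3r+1$ and the good-function spread parameter set to $2r$ (replacing the symbol $r$ in that lemma's statement) to obtain an $(a, 3r+1, 2r, t, n)$-good function $f$ for suitable $a$ and $t$. The Lemma's inequality becomes $\binom{n}{3r+1}\,e^{2rt}(2r/a)^{(r+1)t}<1$, i.e.\ $(3r+1)\log n + 2rt < (r+1)t\log(a/(2r))$, which balances for $a = Cr$ and $t = C'\log n$ with absolute constants chosen so that $at \le 18\,r\log n$. For each $\alpha \in [a]$ and $\tau \in [t]$, set $R_{\alpha,\tau} := f^{-1}(\alpha,\tau)$ and let $L_{\alpha,\tau}$ be the linear extension that orders multisets by $|S|_{R_{\alpha,\tau}}$, breaking ties by a fixed linear extension of $\M_{[k,\ell]}^n$; this is monotone because $|\cdot|_{R_{\alpha,\tau}}$ respects multiset inclusion. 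Take $\LL_2 := \{L_{\alpha,\tau}\}$, of size $at \le 18\,r\log n$.

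By the critical-pair proposition, it suffices to reverse every critical pair $(S,T)$ with $\dist{T\setminus S} > 3r$; such a pair has $|S|=k$ and $|T|=\ell$, so writing $A:=S\setminus T$ and $B:=T\setminus S$ gives $|B|-|A| = r$. Since $\dist{B} \ge 3r+1 = b$, I can fix $X\subseteq\supp(B)$ with $|X|=b$ and invoke the good-function property to find $\tau$ with $|f(X,\tau)|>2r$; a fortiori $|f(\supp(B),\tau)|\ge|f(X,\tau)|>2r$. If some $\alpha \notin f(\supp(B),\tau)$ satisfies $R_{\alpha,\tau}\cap\supp(A)\ne\emptyset$, then $|A|_{R_{\alpha,\tau}}\ge 1 > 0 = |B|_{R_{\alpha,\tau}}$ and $L_{\alpha,\tau}$ reverses. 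Otherwise, $\supp(A)$ lies entirely in the union of parts hit by $\supp(B)$, so the integers $v_\alpha := |A|_{R_{\alpha,\tau}} - |B|_{R_{\alpha,\tau}}$ for $\alpha \in f(\supp(B),\tau)$ sum to $|A|-|B|=-r$; with more than $2r$ such $\alpha$, their average exceeds $-\tfrac12$, so by integrality $\max_\alpha v_\alpha \ge 0$, and if $\max_\alpha v_\alpha \ge 1$ we are done.

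The main obstacle is the borderline sub-case $\max_\alpha v_\alpha = 0$. There, at most $r$ parts can have $v_\alpha < 0$ (since each such contributes at least $1$ to $|B|-|A|=r$), so strictly more than $r$ parts must have $v_\alpha = 0$ with $|A|_{R_{\alpha,\tau}} = |B|_{R_{\alpha,\tau}} \ge 1$; in particular $\dist{A} > r$. To close this sub-case I would invoke the good-function property a second time, either applying it directly to $\supp(A)$ when $\dist{A} \ge b$ or to the enlarged set $\supp(A)\cup\supp(B)$ (which has cardinality at least $4r+2 > b$), producing an alternative partition $\tau'$ whose spread either places some element of $\supp(A)$ in a part disjoint from $\supp(B)$ (giving an immediate reversal via $L_{\alpha,\tau'}$) or contradicts the tight equality structure found under $\tau$. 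The delicate part is to choose the parameters $a$ and $t$ in \cref{lem:good-func} so that the combined analysis closes every sub-case while keeping $|\LL_2| \le 18\,r\log n$.
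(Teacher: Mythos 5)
Your proposal identifies the right bottleneck — the borderline case in which the counting argument only forces $|S_{\alpha,\tau}| \ge |T_{\alpha,\tau}|$ rather than a strict inequality — but it does not resolve it, and the resolution is precisely where the paper introduces its one nonobvious idea. The paper does \emph{not} try to force a strict size gap. Instead, for each part $R_{\alpha,\tau}$ it builds \emph{two} extensions $L_{\alpha,\tau,1}$ and $L_{\alpha,\tau,2}$, using tiebreakers $M_1,M_2$ that both sort multisets by size but sort the equal-size multisets in \emph{opposite} orders. Since $\alpha\in f(T\setminus S,\tau)$ forces $S_{\alpha,\tau}\ne T_{\alpha,\tau}$ even when the sizes agree, one of $M_1,M_2$ necessarily puts $S_{\alpha,\tau}$ above $T_{\alpha,\tau}$, and the tie case is handled for free. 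Your construction uses a single fixed tiebreaker for every $L_{\alpha,\tau}$, so if that tiebreaker happens to put $T_{\alpha,\tau}$ above $S_{\alpha,\tau}$ in every tied part, none of your extensions reverse the pair; the "invoke the good-function property a second time" sketch does not fix this, because the bank of partitions is fixed once and for all, and nothing prevents the same tight equality structure from recurring in the second partition you choose.

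There is a second, independent gap: the parameter choice. You propose applying \cref{lem:good-func} with $b=3r+1$ and spread parameter $2r$, so the exponent in the condition is $(b-2r)t=(r+1)t$. Solving $(3r+1)\log n < t\bigl[(r+1)\log(a/(2r))-2r\bigr]$ requires $\log(a/(2r))>2r/(r+1)$, hence $a\gtrsim 2re^2\approx 15r$, and optimizing $a\cdot t$ under this constraint gives $at\approx 6e^3\,r\log n\approx 120\,r\log n$, which is far above the target $18r\log n$. The paper instead takes $a=b=3r$ with spread parameter $r$ (so the exponent is $(b-r)t=2rt$, not $(r+1)t$) and $t=3\log n$, verified directly via $\binom{n}{3r}e^{rt}3^{-2rt}<n^{3r}e^{-rt}=1$; this gives $at=9r\log n$, and the factor of $2$ from the pair of tiebreakers lands exactly at $2at=18r\log n$. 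So both the spread parameter and the number of extensions per part need to change to make your outline close.
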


\begin{proof}

If $3r>n$, then $\dim (\M_{[k,\ell]}^n)\le n<18r\log n$. So assume $3r\le n$ and set $a=b=3r$, $t=3\log n$. Note that the condition in \cref{lem:good-func} holds, since
$$\binom{n}{b}e^{rt}(r/a)^{(b-r)t}=\binom{n}{3r}e^{rt}3^{-2rt}<n^{3r}e^{-rt}=1.$$
So, we fix an $(a,b,r,t,n)$-good function $f$. Now we will use $R_{\alpha,\tau}$ as described above.

For each $\alpha\in[a],\tau\in[t],j\in[2]$, we will construct a linear extension $L_{\alpha,\tau,j}$ of $\M_{k,l}^n$. That is, we will construct two extensions for each part in each partition. Our set of extensions $\LL_2$ will consist of these $2at$ extensions.

Fix two linear extensions $M_1$ and $M_2$ of $\M^n$ which both order multisets by size, but order the multisets of a given size in opposite orders. Also let $M_0$ be an arbitrary linear extension of $\M^n$.

For a multiset $S$, let $S_{\alpha,\tau}$ be the multiset obtained by restricting $S$ to the elements of $R_{\alpha,\tau}$, i.e., $S_{\alpha,\tau}=\{i\in S \mid f(i,\tau)=\alpha\}$. Note that $S_{\alpha,\tau}$ keeps the same multiplicities of elements as $S$.

We now construct our linear extensions $L_{\alpha,\tau,j}$. We let $S<_{L_{\alpha,\tau,j}}T$ if $S_{\alpha,\tau}<_{M_j}T_{\alpha,\tau}$. If $S_{\alpha,\tau}=T_{\alpha,\tau}$ then we let $S<_{L_{\alpha,\tau,j}}T$ if and only if $S<_{M_0}T$.

We now show that $\LL_2$ satisfies the desired condition. Suppose that $(S,T)$ is an incomparable pair of multisets in $\M_{[k,\ell]}^n$ with $\dist{T\setminus S}>3r=b$. Since $f$ is $(a,b,r,t,n)$-good, there exists $\tau\in[t]$ such that $\abs{f(T\setminus S,\tau)}>r$ holds\footnote{We let $f(X,\tau)=f(\supp(X),\tau)$ for multisets $X$.}.

If there exists $\alpha\in[a]$ such that $\abs{S_{\alpha,\tau}}>\abs{T_{\alpha,\tau}}$, then $S>T$ in both $L_{\alpha,\tau,1}$ and $L_{\alpha,\tau,2}$. So assume that $\abs{S_{\alpha,\tau}}\le\abs{T_{\alpha,\tau}}$ for all $\alpha\in[a]$. Next, note that 
$$\sum_{\alpha\in[a]}\abs{S_{\alpha,\tau}}=\abs{S},\sum_{\alpha\in[a]}\abs{T_{\alpha,\tau}}=\abs{T}.$$
Recall that $k\le \abs{S},\abs{T}\le \ell$, so $\abs{T}-\abs{S}\le \ell-k=r.$ Hence there are at most $r$ values of $\alpha\in[a]$ for which $\abs{S_{\alpha,\tau}}<\abs{T_{\alpha,\tau}}$. Thus there exists $\alpha\in f(T\setminus S,\tau)$ with $\abs{S_{\alpha,\tau}}=\abs{T_{\alpha,\tau}}$. Additionally, we can't have $S_{\alpha,\tau}=T_{\alpha,\tau}$ since $\alpha\in f(T\setminus S,\tau)$. So $S>T$ in either $L_{\alpha,\tau,1}$ or $L_{\alpha,\tau,2}$, and we are done.
\end{proof}
Applying \cref{lem:L1} with $r=\ell-k$ and \cref{lem:L2}, we have that $$\dim(\M_{[k,\ell]}^n)\le (3r+1)^2\log n+18r\log n \le 34(\ell-k)^2\log n.$$
This proves \cref{thm:multisets}.




\section{Weighted Multisets}\label{sec:weighted-multisets}







In this section we prove \cref{thm:weighted-multisets} using the same proof strategy as for \cref{thm:multisets}. This time we are given a value of $r$ which depends somewhat on $\ell-k$. We will again construct two sets of extensions of $\M_{[k,\ell]}^{n,\vv}$. The first set of extensions, $\LL_1$, will deal with all incomparable pairs of multisets $(S,T)$ for which $\dist{T\setminus S}\le 3r$, and the second set of extensions, $\LL_2$, will deal with all incomparable pairs of multisets $(S,T)$ for which $\dist{T\setminus S}>3r.$

To construct $\LL_1$, we apply \cref{lem:L1} and restrict the resulting linear extensions to $\M_{[k,\ell]}^{n,\vv}$. This gives us at most $(3r+1)^2\log n$ linear extensions of $\M_{[k,\ell]}^{n,\vv}$ such that for every incomparable pair of multisets $(S,T)$ with $\dist{T\setminus S}\le 3r,$ there exists an extension $L\in\LL_1$ where $S>_LT$.

To construct $\LL_2$, we will use the same strategy as in the proof of \cref{lem:L2}. However, we will need some new ideas. Previously, for an incomparable pair of multisets $(S,T)$, if a subset $R_{\alpha,\tau}$ of $[n]$ ordered $T$ above $S$, then this would ``cost'' $1$ from $\abs{T}-\abs{S}\le \ell-k$. However, when dealing with $\vv$-size, we can no longer make such a claim, since $\abs{T_{\alpha,\tau}}_{\vv}-\abs{S_{\alpha,\tau}}_{\vv}$ could be very small. Instead, we will find a way to still ``win'' if $\abs{T_{\alpha,\tau}}_{\vv}-\abs{S_{\alpha,\tau}}_{\vv}$ is sufficiently small.

\begin{lemma}\label{lem:slant-L2}
Let $\vv\in\R^n_{>0}$, and $k,\ell,r$ be real numbers with $0<k<\ell$ and $r\ge1$. Suppose that $m(\vv,r)\ge 2(\ell-k)$. Then there exists a set $\LL_2$ of at most $27r\log n$ linear extensions of $\M_{[k,\ell]}^n$ such that for every incomparable pair of multisets $(S,T)$ with $\dist{T\setminus S}> 3r,$ there exists an extension $L\in\LL_2$ where $S>_LT$.
\end{lemma}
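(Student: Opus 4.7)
The plan is to parallel the proof of \cref{lem:L2}, retaining the good-function framework of \cref{lem:good-func} but enlarging the family of linear extensions attached to each pair $(\alpha,\tau)$. I would apply \cref{lem:good-func} with $a=b=3\lceil r\rceil$ and $t=3\log n$, which produces an $(a,b,\lceil r\rceil,t,n)$-good function $f\colon [n]\times[t]\to[a]$ by essentially the same computation as before; the resulting $a\cdot t \approx 9r\log n$ pairs, with three linear extensions each, target the stated bound of $27r\log n$.

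For each $(\alpha,\tau)$ I would build two extensions $L_{\alpha,\tau,1},L_{\alpha,\tau,2}$ that mirror \cref{lem:L2} but with $\vv$-size in place of cardinality: both order $S$ by $\abs{S_{\alpha,\tau}}_\vv$, with opposite tiebreakers $M_1,M_2$ ranking multisets of equal $\vv$-size in opposite ways. The third extension $L_{\alpha,\tau,3}$ is the ``slanted'' addition indicated by the lemma's name, and is designed to reverse an incomparable pair $(S,T)$ exactly in the situation where $\abs{T_{\alpha,\tau}}_\vv - \abs{S_{\alpha,\tau}}_\vv$ is positive but so small that the weighted-size extensions cannot help. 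A natural candidate is a linearly tilted size such as $\abs{S_{\alpha,\tau}}_\vv + c\abs{S_{\alpha,\tau}}$ for a constant $c$ calibrated to the weights, interpolating between weighted and unweighted size; alternatively the pure unweighted count $\abs{S_{\alpha,\tau}}$ with an appropriate tiebreaker may suffice.

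The verification is a case split. Given incomparable $(S,T)$ in $\M_{[k,\ell]}^{n,\vv}$ with $\dist{T\setminus S}>3r$, choose $X\subseteq\supp(T\setminus S)$ of size $3\lceil r\rceil$ and take $\tau$ with $\abs{f(X,\tau)}>r$. If some $\alpha\in f(X,\tau)$ has $\abs{S_{\alpha,\tau}}_\vv\ge\abs{T_{\alpha,\tau}}_\vv$, then $S_{\alpha,\tau}\ne T_{\alpha,\tau}$ combined with the opposite tiebreakers $M_1,M_2$ forces one of $L_{\alpha,\tau,1},L_{\alpha,\tau,2}$ to reverse $(S,T)$, exactly as in \cref{lem:L2}. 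Otherwise, for every $\alpha\in f(X,\tau)$ one has $\abs{T_{\alpha,\tau}}_\vv > \abs{S_{\alpha,\tau}}_\vv$, and these positive weighted gaps sum to at most $\abs{T}_\vv-\abs{S}_\vv\le \ell-k$; the goal becomes to exhibit some $\alpha$ in this set where $L_{\alpha,\tau,3}$ reverses the pair.

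The main obstacle is precisely this last step, and it is where the hypothesis $m(\vv,r)\ge 2(\ell-k)$ enters essentially. Informally, if $L_{\alpha,\tau,3}$ failed for every $\alpha\in f(X,\tau)$, then selecting $i_\alpha\in\supp(T_{\alpha,\tau}\setminus S_{\alpha,\tau})$ for each such $\alpha$ produces more than $r$ distinct indices in $[n]$ whose combined $\vv$-weight is at least $m(\vv,r)\ge 2(\ell-k)$ by the minimum-sum definition of $m(\vv,\cdot)$. Each $v_{i_\alpha}$ contributes to the gap $\abs{T_{\alpha,\tau}}_\vv-\abs{S_{\alpha,\tau}}_\vv$ up to cancellation from $S_{\alpha,\tau}\setminus T_{\alpha,\tau}$, and the slanted extension must be designed so that its failure bounds this cancellation tightly enough to push the total gap past $\ell-k$, contradicting $\abs{T}_\vv-\abs{S}_\vv\le\ell-k$. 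The factor of $2$ in the hypothesis is exactly the slack needed between the weight sum and the allowed total gap. Pinning down the right slant parameter so that this bookkeeping goes through cleanly is the technical crux, and is the ``key new idea'' the paper highlights at the end of the introduction.
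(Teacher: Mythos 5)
You have the right skeleton: apply \cref{lem:good-func} with $a=b\approx 3r$ and $t\approx 3\log n$, attach three linear extensions to each pair $(\alpha,\tau)$, and run a case split in which the hypothesis $m(\vv,r)\ge 2(\ell-k)$ is used to derive a contradiction in the hard case. But the actual construction of the ``slanted'' extension --- which you yourself flag as the technical crux --- is missing, and neither of your candidates works. Ordering by $\abs{S_{\alpha,\tau}}_\vv + c\abs{S_{\alpha,\tau}}$ or by the raw count $\abs{S_{\alpha,\tau}}$ reverses $(S,T)$ only when $S_{\alpha,\tau}$ has at least as many elements as $T_{\alpha,\tau}$ (modulo the weighted correction). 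A small positive gap $\abs{T_{\alpha,\tau}}_\vv-\abs{S_{\alpha,\tau}}_\vv$ is, however, entirely consistent with $T_{\alpha,\tau}$ having far \emph{more} elements than $S_{\alpha,\tau}$, all of tiny weight, so there is no reason for either extension to reverse the pair in the regime you need it to. Nothing in your setup lets you ``bound the cancellation'' the way you hope.

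The idea the paper uses is a quantization of weighted size at a scale depending on $(\alpha,\tau)$. Set $\varepsilon(\alpha,\tau)=\min_{i\in R_{\alpha,\tau}}v_i$. Define two orderings $K_{\alpha,\tau,1},K_{\alpha,\tau,2}$ of $\R$ by partitioning the line into half-open intervals of length $\varepsilon$, the second partition shifted by $\varepsilon/2$, with intervals ordered increasingly but the points inside each interval ordered \emph{decreasingly}; then let $L_{\alpha,\tau,j}$ order multisets by applying $K_{\alpha,\tau,j}$ to $\abs{S_{\alpha,\tau}}_\vv$ (ties broken by $M_j$, then $M_0$), and let $L_{\alpha,\tau,0}$ be the plain weighted-size order. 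These are genuine linear extensions of $\M_{[k,\ell]}^{n,\vv}$ because $S\subsetneq T$ with $\abs{S_{\alpha,\tau}}_\vv<\abs{T_{\alpha,\tau}}_\vv$ forces a gap of at least $\varepsilon(\alpha,\tau)$, so comparable pairs never fall in the same cell and the within-cell reversal is harmless. And if $0<\abs{T_{\alpha,\tau}}_\vv-\abs{S_{\alpha,\tau}}_\vv<\varepsilon(\alpha,\tau)/2$, the two values lie in the same cell of at least one of the two staggered partitions, so one of $L_{\alpha,\tau,1},L_{\alpha,\tau,2}$ reverses $(S,T)$. Failure for all $\alpha\in f(T\setminus S,\tau)$ therefore forces $\abs{T_{\alpha,\tau}}_\vv-\abs{S_{\alpha,\tau}}_\vv\ge\varepsilon(\alpha,\tau)/2$ for each such $\alpha$; since there are more than $r$ of these $\alpha$ and the $\varepsilon(\alpha,\tau)$ are distinct entries of $\vv$, summing gives $\ell-k\ge\abs{T}_\vv-\abs{S}_\vv>\tfrac12 m(\vv,r)$, contradicting the hypothesis. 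Note also that the paper uses one plain extension and \emph{two} staggered ones per $(\alpha,\tau)$, rather than your two plain and one slanted; a single interval ordering cannot give the $\varepsilon/2$ guarantee, because a tiny gap can still straddle a cell boundary, and it is the two shifted partitions together that rule this out.
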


\begin{proof}

Set $a=b=3r$, $t=3\log n$, and fix an $(a,b,r,t,n)$-good function $f$ as in the proof of \cref{lem:L2}. Let $R_{\alpha,\tau}=\{i\in [n] \mid f(i,\tau)=\alpha\}$ denote part $\alpha$ of partition $\tau$.

For each $\alpha\in[a]$, $\tau\in[t]$, and $j\in\{0,1,2\}$, we will construct a linear extension $L_{\alpha,\tau,j}$ of $\M_{[k,\ell]}^{n,\vv}$. That is, we will construct three extensions for each part in each partition. Our set of extensions $\LL_2$ will consist of these $3at$ extensions.

For a multiset $S$, define $S_{\alpha,\tau}$ as before. Let $M_1$ and $M_2$ be two linear extensions of $\M^n$ which both order multisets by $\vv$-size, but order the multisets of a given $\vv$-size in opposite orders. Let $M_0$ be an arbitrary linear extension of $\M^n$.

First we define $L_{\alpha,\tau,0}$. For each $\alpha,\tau$, we will have $L_{\alpha,\tau,0}$ order multisets $S$ by $\abs{S_{\alpha,\tau}}_{\vv}$, breaking ties using $M_0$. That is, we have $S<_{L_{\alpha,\tau,0}}T$ if $\abs{ S_{\alpha,\tau} }_{\vv} <\abs{ T_{\alpha,\tau} }_{\vv}$. If $\abs{ S_{\alpha,\tau} }_{\vv} =\abs{ T_{\alpha,\tau} }_{\vv}$, then $S<_{L_{\alpha,\tau,0}}T$ if and only if $S<_{M_0}T$.

We now define $L_{\alpha,\tau,j}$ for $j\in\{1,2\}$. For each $\alpha,\tau$, both extensions will order multisets $S$ by taking into account the $\vv$-size $\abs{S_{\alpha,\tau}}_{\vv}$. Specifically, $L_{\alpha,\tau,j}$ will order multisets $S$ by applying an ordering $K_{\alpha,\tau,j}$ of $\R$ to $\abs{S_{\alpha,\tau}}_{\vv}$ as follows. If $\abs{S_{\alpha,\tau}}_{\vv}<_{K_{\alpha,\tau,j}}\abs{T_{\alpha,\tau}}_{\vv}$, then $S<_{L_{\alpha,\tau,j}}T$. This defines how $L_{\alpha,\tau,j}$ orders multisets $S$ and $T$ except when $\abs{S_{\alpha,\tau}}_{\vv}=\abs{T_{\alpha,\tau}}_{\vv}$. We will deal with this edge case later.

To define $K_{\alpha,\tau,j}$, we first let $\varepsilon=\varepsilon_{\vv,f}(\alpha,\tau)=\min_{i\in R_{\alpha,\tau}}(v_i)$, where $v_i$ is the $i$th entry of $\vv$. Now, consider dividing the real number line into half-open intervals of length $\varepsilon$ in the following two ways.
$$\R=\cdots\cup[0,\varepsilon)\cup[\varepsilon,2\varepsilon)\cup[2\varepsilon,3\varepsilon)\cup\cdots$$
$$\R=\cdots\cup[\varepsilon/2,3\varepsilon/2)\cup[3\varepsilon/2,5\varepsilon/2)\cup[5\varepsilon/2,7\varepsilon/2)\cup\cdots$$
We let $K_{\alpha,\tau,1}$ order the intervals in the first partition of $\R$ in increasing order, but order the elements of each interval in decreasing order. We define $K_{\alpha,\tau,2}$ in the same way from the second partition of $\R$.
Specifically, for reals $x<y$, we have $x<_{K_{\alpha,\tau,1}}y$ if and only if $\floor{x/\varepsilon}<\floor{y/\varepsilon}$, and $x<_{K_{\alpha,\tau,2}}y$ if and only if $\floor{x/\varepsilon-1/2}<\floor{y/\varepsilon-1/2}$.

This in turn defines our linear extensions $L_{\alpha,\tau,1}$ and $L_{\alpha,\tau,2}$. For example, for multisets $S$ and $T$ with $\varepsilon<\abs{S_{\alpha,\tau}}_{\vv}<3\varepsilon/2<\abs{T_{\alpha,\tau}}_{\vv}<2\varepsilon$, we will have $S>_{L_{\alpha,\tau,1}}T$ and $S<_{L_{\alpha,\tau,2}}T$. Additionally, note that if $S$ and $T$ are two multisets satisfying $\abs{T_{\alpha,\tau}}_{\vv}-\varepsilon/2<\abs{S_{\alpha,\tau}}_{\vv}<\abs{T_{\alpha,\tau}}_{\vv}$, then at least one of $L_{\alpha,\tau,1}$ and $L_{\alpha,\tau,2}$ orders $S$ greater than $T$.

Now we deal with the edge case of distinct multisets $S,T$ with $\abs{S_{\alpha,\tau}}_{\vv}=\abs{T_{\alpha,\tau}}_{\vv}$. If $S_{\alpha,\tau}\ne T_{\alpha,\tau}$, then $S<_{L_{\alpha,\tau,j}}T$ if and only if $S_{\alpha,\tau}<_{M_j}T_{\alpha,\tau}$. Finally, if $S_{\alpha,\tau}= T_{\alpha,\tau}$, then $S<_{L_{\alpha,\tau,j}}T$ if and only if $S<_{M_0}T$.

Next, we check that $L_{\alpha,\tau,j}$ is indeed a linear extension of $\M_{[k,\ell]}^{n,\vv}$. Suppose $S,T\in\M_{[k,\ell]}^{n,\vv}$ with $S\subseteq T$. Then $S\le_{M_0}T$. Also, $S_{\alpha,\tau}\subseteq T_{\alpha,\tau}$, so $S_{\alpha,\tau}\le_{M_j}T_{\alpha,\tau}$. Thus $S\le_{L_{\alpha,\tau,j}} T$ if $\abs{S_{\alpha,\tau}}_{\vv}=\abs{T_{\alpha,\tau}}_{\vv}$. Now suppose that $\abs{S_{\alpha,\tau}}_{\vv}<\abs{T_{\alpha,\tau}}_{\vv}$. Pick any $i\in (T\setminus S)_{\alpha,\tau}$. Since $S\subseteq T$, we have $$\abs{T_{\alpha,\tau}}_{\vv}-\abs{S_{\alpha,\tau}}_{\vv}=\abs{(T\setminus S)_{\alpha,\tau}}_{\vv}\ge v_i\ge \varepsilon(\alpha,\tau).$$
Thus $\abs{S_{\alpha,\tau}}_{\vv}$ and $\abs{T_{\alpha,\tau}}_{\vv}$ cannot lie in the same half-open interval of length $\varepsilon(\alpha,\tau)$. So $\abs{S_{\alpha,\tau}}_{\vv}<_{K_{\alpha,\tau,j}}\abs{T_{\alpha,\tau}}_{\vv}$ and $S<_{L_{\alpha,\tau,j}} T$. Therefore $L_{\alpha,\tau,j}$ is indeed a linear extension of $\M_{[k,\ell]}^{n,\vv}$.




We now show that $\LL_2$ satisfies the desired condition. Suppose that $(S,T)$ is an incomparable pair of multisets in $\M_{[k,\ell]}^{n,\vv}$ with $\dist{T\setminus S}>3r=b$. Then $k\le \abs{S}_{\vv},\abs{T}_{\vv}\le \ell$. So $\abs{T}_{\vv}-\abs{S}_{\vv}\le \ell-k.$ Since $f$ is $(a,b,r,t,n)$-good, there exists $\tau\in[t]$ such that $\abs{f(T\setminus S,\tau)}>r$. If there exists $\alpha\in[a]$ such that $\abs{S_{\alpha,\tau}}_{\vv}>\abs{T_{\alpha,\tau}}_{\vv}$, then we are done. So assume that $\abs{S_{\alpha,\tau}}_{\vv}\le\abs{T_{\alpha,\tau}}_{\vv}$ for all $\alpha\in[a]$. Next, consider $\alpha\in f(T\setminus S,\tau)$. If $\abs{S_{\alpha,\tau}}_{\vv}>\abs{T_{\alpha,\tau}}_{\vv}-\varepsilon(\alpha,\tau)/2$, then $S>T$ in $L_{\alpha,\tau,j}$ for at least one $j\in\{1,2\}$. So assume that $\abs{T_{\alpha,\tau}}_{\vv}\ge \abs{S_{\alpha,\tau}}_{\vv}+\varepsilon(\alpha,\tau)/2$ for each $\alpha\in f(T\setminus S,\tau)$. Summing over all such $\alpha$, we obtain

$$\ell-k\ge \abs{T}_{\vv}-\abs{S}_{\vv}\ge \frac{1}{2}\sum_{\alpha\in f(T\setminus S,\tau)}\varepsilon(\alpha,\tau)> \frac{1}{2}m(\vv,r),$$
where the last inequality follows by noting that each $\varepsilon(\alpha,\tau)$ is a distinct entry of $\vv$ and using $\abs{f(T\setminus S,\tau)}>r$. Since $m(\vv,r)\ge 2(\ell-k)$, we have a contradiction. This completes the proof.

\end{proof}

Applying \cref{lem:L1} and \cref{lem:slant-L2}, we have that $$\dim(\M_{[k,\ell]}^{n,\vv})\le (3r+1)^2\log n+27r\log n \le 43r^2\log n.$$
This proves \cref{thm:weighted-multisets}.



\section{Results on Divisibility in $\N$}\label{sec:divisibility}

In this section we use \cref{thm:weighted-multisets} to obtain a significantly better bound on $\dim(\D_{[N/\kappa,N]})$, proving \cref{thm:divisibility}.

We use some of the ideas from the proof of \cref{prop:easy-divisibility}. As before, call a prime $p$ \emph{small} if $p\le \kappa$ and \emph{large} if $p>\kappa$. Let $K$ be the set of all positive integer $M\le N$ with no small primes dividing $M$. For each $M\in K$, let $g(M)$ be the set of all positive integers of the form $Mq$, where $q$ has only small prime divisors. Now, recall that to bound $\dim(\D_{[N/\kappa,N]})$ it suffices to bound $\dim(\D_{g(M)\cap [N/\kappa,N]})$ for each $M\in K$. We will accomplish this by applying \cref{thm:weighted-multisets}.

Let $n=\pi(\kappa)$, and let $\vv=(\log2,\log3,\dots,\log p_n)$, where $p_i$ is the $i$th prime. Fix some $M\in K$ and let $k_M=\log((N/\kappa)/M)$ and $\ell_M=\log(N/M)$, so that $\ell_M-k_M=\log(\kappa)$. Now, by associating an integer with its prime factorization we have an isomorphism between posets $$\M_{[k_M,\ell_M]}^{n,\vv}\cong\D_{g(M)\cap [N/\kappa,N]}.$$


If $\kappa<3$ then $\dim(\M_{[k_M,\ell_M]}^{n,\vv})\le 2$. Otherwise, it can be checked (see \cref{app:prime-bash}) that with $r=4\frac{\log \kappa}{\log \log \kappa}$, we have $m(\vv,r)\ge 2\log(\kappa)$. So by \cref{thm:weighted-multisets}, $\dim(\M_{[k_M,\ell_M]}^{n,\vv})\le 43r^2\log n$. This holds for all choices of $M$, so we have that $$\dim(\D_{[N/\kappa,N]})\le 43r^2\log n\le 688\frac{(\log\kappa)^3}{(\log\log \kappa)^2},$$
which proves \cref{thm:divisibility}.

\section{Other Divisibility Orders}\label{sec:polynomial}

In this section we consider other divisibility orders. In some sense the poset $\M_{[k,\ell]}^{n,\vv}$ captures all possible normed divisibility orders subject to reasonable conditions. Specifically, given a (multiplicative) monoid with a norm and some notion of primes and unique factorization\footnote{We just need commutativity, cancellation, and finitely many elements with norm at most $N$ for each $N$.}, we can construct a divisibility order on the elements with norms lying in an interval $[N/\kappa,N]$. Letting $\vv$ be the vector of the norms of primes with norm at most $\kappa$, we can then apply \cref{thm:weighted-multisets}. If we have an understanding of the primes and the norm we can obtain a concrete bound, as in \cref{thm:divisibility}.

As another example of this principle, we analyze the divisibility order on monic polynomials over a finite field. As in the proof of \cref{prop:easy-divisibility}, $P(\F_q)_{[d_0-\delta,d_0]}$ is isomorphic to a disjoint union of posets of the form $\M^{n,\vv}_{k,\ell},$ where $\ell-k=\delta$, $n$ is the number of monic irreducible polynomials over $\F_q[x]$ with degree at most $\delta$, and $\vv\in \R^n$ is the vector of degrees of these $n$ polynomials.

It can be checked (see \cref{app:polynomial-bash}) that $n\le q^\delta$ and that with $r=4.6\frac{\delta\log q}{\log \delta}$, we have $m(\vv,r)\ge 2\delta$. Additionally, since each entry of $\vv$ is at least $1$, we have $m(v,r)\ge 2\delta$ with $r=2\delta$. These two choices of $r$ handle the regimes when $q<\delta$ and $q\ge \delta$, respectively. By applying \cref{thm:weighted-multisets}, we obtain
$$\dim(P(\F_q)_{[d_0-\delta,d_0]})\le 43\left(\min\left(4.6\frac{\delta\log q}{\log \delta},2\delta\right)\right)^2\log(q^\delta)\le \min\left(910\frac{(\delta\log q)^3}{(\log \delta)^2},172\delta^3\log q\right).$$
This proves \cref{thm:polynomial}.

\section{Further Directions}\label{sec:further-directions}

Considering multiset posets was initially motivated by working on divisibility posets. However, multiset posets are a natural extension of subset posets and interesting on their own. It would be nice to see \cref{thm:weighted-multisets} applied to problems not directly obtained from divisibility orders.

Although \cref{thm:divisibility} is a substantial improvement over previously known bounds, it is not quite tight with the lower bound given by Lewis and Souza. This lower bound cannot be improved without new ideas, since it is obtained by embedding the optimal subset poset of the form $\Q_{[1,\ell]}^n$ into $\D_{[N/\kappa,N]}$ \cite{Lewis-Souza-paper}. On the other hand, there are also reasons to believe that \cref{thm:divisibility} might be close to tight. The limiting factor in the upper bound is the dimension of the subset poset $\Q_{[1,3r]}^n$. In the regime when $r$ is close to $\log n$, Kierstead showed that $\dim(\Q_{[1,3r]}^n)$ is close to $r^3$ \cite{Kierstead-paper}. Unfortunately the proof technique does not extend easily to the divisibility problem, but perhaps it could be modified with new ideas.

\section{Acknowledgements}\label{sec:acknowledgements}


I would like to thank Noah Kravitz for helpful conversations and detailed feedback on earlier versions of this paper. Additionally, I am grateful to Victor Souza for bringing other divisibility orders to my attention, among other helpful comments. I would also like to thank Michael Ren and William Trotter for helpful conversations and Joe Gallian, Carl Schildkraut, Zach Hunter, and the anonymous reviewers for comments on earlier versions of this paper. Finally, I want to express my gratitude to Joe Gallian, Amanda Burcroff, Colin Defant, Noah Kravitz, and Yelena Mandelshtam for organizing the Duluth REU and the project suggestion.

\section{Statements}

\textbf{Funding Statement.} This research was conducted at the University of Minnesota Duluth Mathematics REU and was supported, in part, by NSF-DMS Grant 1949884 and NSA Grant H98230-20-1-0009. Additional support was provided
by the CYAN Mathematics Undergraduate Activities Fund.

\textbf{Data Availability Statement.} This paper does not make use of any data.

\textbf{Conflict of Interest Statement.} The author is not aware of any relevant conflict of interest.

\textbf{Author Contribution Statement.} This is a single-author paper. All contributions are due to Milan Haiman. \cref{sec:acknowledgements} includes acknowledgements.

\bibliographystyle{plain}
\bibliography{refs}

\begin{thebibliography}{10}

\bibitem{BKKT-paper}
G.R. Brightwell, H.A. Kierstead, A.V. Kostochka, and W.T. Trotter.
\newblock The dimension of suborders of the boolean lattice.
\newblock {\em Order}, 11(2):127--134, 1994.

\bibitem{Dushnik-paper}
B.~Dushnik.
\newblock Concerning a certain set of arrangements.
\newblock {\em Proceedings of the American Mathematical Society}, 1:788--796,
  1950.

\bibitem{Dushnik-Miller-paper}
B.~Dushnik and E.~W. Miller.
\newblock Partially ordered sets.
\newblock {\em American Journal of Mathematics}, 63:600--610, 1941.

\bibitem{Furedi-Khan-paper}
Z.~F{\"u}redi and J.~Kahn.
\newblock On the dimensions of ordered sets of bounded degree.
\newblock {\em Order}, 3(1):15--20, 1986.

\bibitem{Kierstead-paper}
H.~Kierstead.
\newblock On the order dimension of 1-sets versus k-sets.
\newblock {\em Journal of Combinatorial Theory, Series A}, 73(2):219--228,
  1996.

\bibitem{Kierstead-overview}
H.A. Kierstead.
\newblock The dimension of two levels of the boolean lattice.
\newblock {\em Discrete Mathematics}, 201(1):141--155, 1999.

\bibitem{Lewis-Souza-paper}
David Lewis and Victor Souza.
\newblock The order dimension of divisibility.
\newblock {\em Journal of Combinatorial Theory, Series A}, 179:105391, Apr
  2021.

\bibitem{Nesetril-Pudlak-paper}
J.~Ne{\v{s}}et{\v{r}}il and P.~Pudl{\'a}k.
\newblock {\em A Note on Boolean Dimension of Posets}, pages 137--140.
\newblock Springer Berlin Heidelberg, Berlin, Heidelberg, 1989.

\bibitem{Novak-paper}
V{\'\i}t{\v{e}}zslav Nov{\'a}k.
\newblock On the well dimension of ordered sets.
\newblock {\em Czechoslovak Mathematical Journal}, 19(1):1--16, 1969.

\bibitem{Robin-bounds}
G.~Robin.
\newblock Estimation de la fonction de {T}chebychef $\theta$ sur le k-i{\`e}me
  nombre premier et grandes valeurs de la fonction $\omega$(n) nombre de
  diviseurs premiers de n.
\newblock {\em Acta Arithmetica}, 42:367--389, 1983.

\bibitem{Scott-Wood-paper}
Alex Scott and David~R. Wood.
\newblock Better bounds for poset dimension and boxicity.
\newblock {\em Transactions of the American Mathematical Society},
  373(3):2157–2172, Oct 2019.

\bibitem{improved-divisibility}
Victor Souza and Leo Versteegen.
\newblock Improved bounds for the dimension of divisibility, 2022.
\newblock arXiv:2202.04001v2.

\bibitem{Spencer-paper}
J.~Spencer.
\newblock Minimal scrambling sets of simple orders.
\newblock {\em Acta Mathematica Hungarica}, 22:349--353, 1971.

\bibitem{Trotter-book}
William~T. Trotter.
\newblock {\em Combinatorics and Partially Ordered Sets: Dimension Theory}.
\newblock Johns Hopkins University Press, 1992.

\end{thebibliography}

\appendix

\section{Computations for \cref{thm:divisibility}}\label{app:prime-bash}

In this appendix we show the following bound. 

\begin{proposition}
Let $\vv=(\log 2,\log 3, \dots, \log p_n)$, where $p_i$ is the $i$th prime. Let $\kappa\ge 3$ be a real number, and let $r=4\frac{\log \kappa}{\log \log \kappa}$. Then $m(\vv,r)\ge 2\log(\kappa)$.
\end{proposition}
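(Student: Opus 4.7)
The plan is to reduce the claim to an inequality about the Chebyshev theta function $\theta(x) = \sum_{p \leq x} \log p$ and then apply classical explicit estimates. Since $p_1 < p_2 < \cdots$, the entries of $\vv$ are already in increasing order, so the least $\lfloor r \rfloor$ coordinates of $\vv$ are $\log p_1, \dots, \log p_{\lfloor r \rfloor}$. Hence
\[
m(\vv,r) \;=\; \sum_{i=1}^{\lfloor r \rfloor} \log p_i \;=\; \theta(p_{\lfloor r \rfloor}).
\]
Writing $L = \log \kappa$ and $m = \lfloor r \rfloor \geq 4L/\log L - 1$, the task becomes showing $\theta(p_m) \geq 2L$.

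For $L$ exceeding a modest explicit threshold $L_0$, I would chain two classical effective bounds: Rosser's estimate $p_n \geq n \log n$ (valid for all $n \geq 1$) and an explicit form of the prime number theorem such as $\theta(x) \geq x(1 - O(1/\log x))$ for $x$ past a small explicit threshold. Together these give $\theta(p_m) \geq (1-o(1))\, m \log m$. Substituting $m \approx 4L/\log L$ and $\log m = \log L - \log\log L + O(1)$ yields
\[
m \log m \;\geq\; 4L\!\left(1 - \frac{\log\log L}{\log L}\right) + O\!\left(\frac{L}{\log L}\right).
\]
Since $(\log\log L)/(\log L)$ peaks at $1/e$ (attained at $L = e^e$), the main term is at least $4L(1 - 1/e) > 2L$, so one obtains $\theta(p_m) \geq 2L$ with a multiplicative cushion.

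For the residual range $\kappa \in [3, e^{L_0}]$, I would verify the claim by direct computation. Critically, $r = 4L/\log L$ is \emph{large} when $L$ is small, since $\log L$ is close to $0$; thus $m$ is large and $\theta(p_m)$ comfortably exceeds $2L$. For example, for $\kappa \in [3, e^e]$ one has $m \geq 10$, whence $\theta(p_m) \geq \theta(p_{10}) = \theta(29) \approx 22.6$, which dwarfs $2L \leq 2e \approx 5.4$. A finite check at the breakpoints of $\lfloor r \rfloor$ handles the remainder.

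The main obstacle is packaging the explicit constants so that $L_0$ is small enough to make the residual computation feasible, while ensuring the effective PNT bound genuinely applies at $p_m$ for all $L \geq L_0$. The factor-of-two margin in the main estimate is comfortable but not enormous, so the lower-order terms $-(\log\log L)/\log L$ and $O(\log L/L)$ should be tracked carefully, and the rounding losses from the floor $m = \lfloor r \rfloor$ need to be absorbed into the cushion.
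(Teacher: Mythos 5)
Your reduction to $\vartheta(p_{\lfloor r\rfloor})$ and the key observation that $(\log\log L)/\log L$ is maximized at $1/e$ are exactly the paper's approach; the only substantive difference is the choice of effective estimate. Rather than chaining Rosser's $p_n\ge n\log n$ with an explicit PNT lower bound for $\vartheta(x)$, the paper invokes a single bound of Robin, $\vartheta(p_n)\ge n(\log n+\log\log n-1.076869)$, which yields the claim uniformly for all $\kappa\ge 3$ (since $\lfloor r\rfloor\ge 10$ always) and thereby avoids the case split between large $L$ and a residual computational range that your sketch would require.
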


\begin{proof}

We use \cite{Robin-bounds}, which provides bounds on the Chebyshev function $\vartheta(x)=\sum_{p\le x}\log p$ (where the sum is over primes $p$). Note that $m(\vv,r)=\vartheta(p_{\floor{r}})$. By \cite[Theorem 6]{Robin-bounds}, we have that $$\vartheta(p_{\floor{r}})\ge \floor{r}(\log \floor{r}+\log\log\floor{r}-1.076869).$$
Now, let $\lambda=\log \kappa>1$ and note that $\frac{\lambda}{\log\lambda}\ge e$. So $$\floor{r}>r-1> \left(4-\frac{1}{e}\right)\frac{\lambda}{\log \lambda}\ge3.5\frac{\lambda}{\log \lambda}.$$
Let $\eta=1.076869$ and note that $\eta<\log (3.5)$. Using the above bounds and $\frac{\log\log\lambda}{\log\lambda}\le\frac{1}{e}$, we have that 

\begin{align*}
m(\vv,r)&>3.5\frac{\lambda}{\log \lambda}\left(\log\left(3.5\frac{\lambda}{\log \lambda}\right)+\log\log\left(3.5\frac{\lambda}{\log \lambda}\right)-\eta\right)\\
&>3.5\frac{\lambda}{\log \lambda}\left(\log(3.5)+\log\lambda-\log\log\lambda-\eta\right)\\
&>3.5\frac{\lambda}{\log \lambda}\left(\log\lambda-\log\log\lambda\right)\\
&=2\lambda+\lambda\left(1.5-\frac{3.5\log\log\lambda}{\log\lambda}\right)\\
&\ge2\lambda+\lambda\left(1.5-\frac{3.5}{e}\right)\\
&>2\lambda.
\end{align*}
So $m(\vv,r)\ge 2\log\kappa$, as desired.

\end{proof}

\section{Computations for \cref{thm:polynomial}}\label{app:polynomial-bash}

In this appendix we show bounds on irreducible polynomials in $\F_q[x]$. Let $n_i$ be the number of monic irreducible polynomials of degree $i$. Recall that the product of all monic irreducible polynomials of degree dividing $i$ is $x^{q^i}-x$. In particular, this means that $n_i\le q^i/i$. In the notation of \cref{sec:polynomial}, we have that $n=n_1+\dots+n_{\delta}$ and $\vv\in \R^n$ is a vector with $n_i$ entries being $i$.

\begin{proposition}
We have $n\le q^\delta$.
\end{proposition}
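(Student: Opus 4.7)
The plan is to combine the per-degree bound $n_i \le q^i/i$ (which the preceding paragraph extracts from the fact that $x^{q^i} - x$ is the product of all monic irreducibles of degree dividing $i$, so in particular $i \cdot n_i \le q^i$) with a clean bound on the partial sum $\sum_{i=1}^{\delta} q^i/i$.

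First I would observe that
\[
n = \sum_{i=1}^{\delta} n_i \le \sum_{i=1}^{\delta} \frac{q^i}{i},
\]
so it suffices to prove that $\sum_{i=1}^{\delta} q^i/i \le q^\delta$ for every $q\ge 2$ and every $\delta\ge 1$.

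I would prove this auxiliary inequality by induction on $\delta$. The base case $\delta=1$ reads $q\le q$. For the inductive step, assuming the bound at $\delta-1$, I would write
\[
\sum_{i=1}^{\delta} \frac{q^i}{i} \;\le\; q^{\delta-1} + \frac{q^\delta}{\delta} \;=\; q^\delta\left(\frac{1}{q}+\frac{1}{\delta}\right),
\]
and it remains to verify that $1/q+1/\delta \le 1$. This is immediate from $1/2+1/2 = 1$ whenever $q,\delta\ge 2$, so the induction runs from the base case $\delta=1$ through every $\delta\ge 2$.

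There is essentially no obstacle here: the only point that needs care is that $1/q+1/\delta\le 1$ fails when $\delta=1$, but that case is already handled by the base step, and $q\ge 2$ is automatic since $q$ is a prime power. Applying the auxiliary inequality to the first display yields $n\le q^\delta$.
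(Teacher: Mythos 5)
Your proof is correct. It starts from the same per-degree bound $n_i\le q^i/i$ that the paper's preceding paragraph extracts from the factorization of $x^{q^i}-x$, but the finishing step differs: the paper converts this for $i\ge 2$ into $n_i\le q^i\cdot\frac{q-1}{q}=q^i-q^{i-1}$ (using $\frac{1}{i}\le\frac12\le\frac{q-1}{q}$), so that the sum $q+(q^2-q)+\cdots+(q^\delta-q^{\delta-1})$ telescopes directly to $q^\delta$, whereas you instead prove $\sum_{i=1}^{\delta}q^i/i\le q^\delta$ by induction on $\delta$, with the inequality $\frac{1}{q}+\frac{1}{\delta}\le 1$ driving the inductive step. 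Both routes are elementary and rely on exactly the same two facts ($q\ge2$, and $1/i\le 1/2$ once $i\ge 2$); the telescoping version avoids the explicit induction and is a line shorter, but there is no mathematical gap in yours, and you correctly noticed that the $\delta=1$ case must be handled separately since $1/q+1/\delta\le1$ fails there.
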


\begin{proof}
We have $n_1=q$ and for $i\ge2$,$$n_i\le q^i\cdot\frac{1}{i}\le q^i\cdot\frac{q-1}{q}=q^i-q^{i-1}.$$
So $$n=n_1+\dots+n_{\delta}\le q+(q^2-q)+\dots+(q^\delta-q^{\delta-1})=q^\delta.$$
\end{proof}

\begin{proposition}
Let $r=4.6\frac{\delta\log q}{\log \delta}$. Then $m(\vv,r)\ge 2\delta$.
\end{proposition}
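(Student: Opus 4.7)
The plan is to bound $m(\vv,r)$ from below by relating the smallest entries of $\vv$ to two elementary facts about the number $n_i$ of monic irreducibles of degree $i$ in $\F_q[x]$: the counting inequality $n_i \le q^i/i$, and the factorization identity $\sum_{i\mid d} i n_i = q^d$ already invoked in the first proposition of this appendix. Writing $N_d := \sum_{i=1}^d n_i$ and $S_d := \sum_{i=1}^d i n_i$, these yield $N_d \le \sum_{i=1}^d q^i \le 2q^d$ for $q\ge 2$ and $S_d \ge q^d$ for $d \ge 1$.

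Since $\vv$ contains exactly $n_i$ entries equal to $i$ for each $1\le i \le \delta$, for any $d$ with $N_d \le \lfloor r \rfloor$ the $\lfloor r \rfloor$ smallest entries of $\vv$ consist of all $N_d$ entries of degree at most $d$ (contributing $S_d$) together with $\lfloor r \rfloor - N_d$ further entries, each of degree at least $d+1$. This gives the starting inequality
\[
m(\vv,r) \;\ge\; S_d + (d+1)(\lfloor r \rfloor - N_d).
\]

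I would then split into two regimes. If $\lfloor r \rfloor \ge 2\delta$, the trivial bound $m(\vv,r)\ge \lfloor r \rfloor \ge 2\delta$ (every entry of $\vv$ is at least $1$) already suffices. Otherwise, when $\lfloor r \rfloor < 2\delta$, I would take $d = \lfloor \log(\lfloor r \rfloor/4)/\log q \rfloor$, so that $q^d \le \lfloor r \rfloor/4$ and hence $N_d \le \lfloor r \rfloor/2$. The hypothesis $\lfloor r \rfloor < 2\delta$ forces $q$ to be small compared to $\delta$ (specifically $q < \delta^{0.55}$ or so), and monotonicity of $x/\log x$ then yields $r \ge 4.6q \ge 4q+1$, so $\lfloor r \rfloor \ge 4q$ and $d \ge 1$, making $S_d \ge q^d$ usable. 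Substituting $r = 4.6 \delta \log q/\log \delta$ along with the bound $d+1 \ge \log(\lfloor r \rfloor/4)/\log q$ into $m(\vv,r) \ge q^d + (d+1)\lfloor r \rfloor/2$ produces a right-hand side of approximately $\tfrac{4.6}{2}\delta = 2.3\delta$, comfortably exceeding the target $2\delta$.

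The main obstacle is turning the ``$\approx 2.3\delta$'' estimate into a rigorous inequality uniform in $q$ and $\delta$. The constant $4.6$ appears tuned so that the inequality is near-sharp in certain boundary configurations (for instance $q=2$, $\delta=10$ yields essentially $m(\vv,r) = 2\delta$), so one must carefully account for the floors, for the gap between $\log(\lfloor r \rfloor/4)$ and $\log r$, and possibly dispose of a handful of smallest-$\delta$ cases by direct computation.
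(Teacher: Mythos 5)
Your starting inequality is the same as the paper's once you expand it: writing $N_d=\sum_{i\le d}n_i$ and $S_d=\sum_{i\le d}in_i$, the bound $m(\vv,r)\ge S_d+(d+1)\bigl(\lfloor r\rfloor-N_d\bigr)$ is identical to $(d+1)\lfloor r\rfloor-\sum_{i=1}^d(d+1-i)n_i$, which is exactly the paper's inequality with its parameter $d$ equal to your $d+1$. So the framework coincides. The divergence is in (a) the choice of $d$ and (b) how the correction term is bounded, and both choices hurt you.

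On (a): you pick $d$ so that $q^d\le\lfloor r\rfloor/4$, which in the regime $\lfloor r\rfloor<2\delta$ (equivalently $\log q\lesssim 0.44\log\delta$) forces $\lfloor r\rfloor/4\approx 1.15\,\delta\log q/\log\delta$ to be much smaller than $\delta$. Consequently $\log(\lfloor r\rfloor/4)$ is well below $\log\delta$, and the heuristic ``$(d+1)\lfloor r\rfloor/2\approx 2.3\delta$'' does not hold: it would require $\lfloor r\rfloor/4\approx\delta$, i.e.\ $q\approx\delta^{0.87}$, which contradicts the regime you are in. Concretely, for $q=2$, $\delta=100$ one gets $\lfloor r\rfloor=69$, $d=4$, and the continuous form of your estimate, $\tfrac34\lfloor r\rfloor+\tfrac{\lfloor r\rfloor\log(\lfloor r\rfloor/4)}{2\log q}$, evaluates to about $193.5<200=2\delta$. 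Taking the actual integer $d=4$ gives $16+5\cdot(69-32)=201$, squeaking past $200$ by exactly $1$ --- so the claim is not obviously false, but any clean uniform argument along these lines will be on a knife's edge and likely fail for some $(q,\delta)$.

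On (b): the separate bounds $S_d\ge q^d$ and $N_d\le 2q^d$ are substantially weaker than what the paper extracts. The paper bounds $\sum_{i<d'}(d'-i)n_i$ (with its $d'=d+1$) by rearrangement: since $d'-i$ is decreasing and $q^i/i$ is increasing, $\sum(d'-i)\frac{q^i}{i}\le\sum i\cdot\frac{q^i}{i}=q+\dots+q^{d'-1}\le 2q^{d'-1}$. Your modular bounds give $(d'\,)N_d-S_d\le(2d'-1)q^d$ instead of $2q^d$, a loss of a full factor of $d'$. In the $q=2$, $\delta=100$ example the paper's version gives $m\ge 313$ versus your $201$; the slack is what makes the paper's argument robust. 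The paper then chooses $d'\ge\log\delta/\log q$ (so $q^{d'-1}\le\delta$), which yields $d'\lfloor r\rfloor>4\delta$ and $2q^{d'-1}\le 2\delta$, hence $m>2\delta$ with room to spare and no delicate floor-chasing. I would recommend adopting this larger choice of $d$ together with the rearrangement bound; as it stands, your sketch has a genuine gap in that the advertised ``$\approx 2.3\delta$'' margin does not exist in the regime where it is needed.
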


\begin{proof}
For each $i$, $\vv$ has $n_1+\dots+n_{i-1}$ entries that are less than $i$. Thus $m(\vv,r)$ has $$\floor{r}-(n_1+\dots+n_{i-1})$$ terms that are at least $i$. Then for any $d\in \N$ we have $$m(\vv,r)\ge \sum_{i=1}^d\floor{r}-(n_1+\dots+n_{i-1})=d\floor{r}-(d-1)n_1-\dots-(1)n_{d-1}.$$
We will apply this with $d=\floor{\log \delta/\log q}+1$. Since $\frac{\delta}{\log \delta}\ge e$ and $\log q\ge \log 2$, $$\floor{r}>r-1\ge 4\frac{\delta\log q}{\log \delta}+0.6\cdot e\log 2-1>4\frac{\delta\log q}{\log \delta}.$$
Now we have $$m(\vv,r)> \frac{\log \delta}{\log q}\cdot4\frac{\delta\log q}{\log \delta}-(d-1)\frac{q^1}{1}-\dots-(1)\frac{q^{d-1}}{d-1}.$$
By rearrangement, $$(d-1)\frac{q^1}{1}+\dots+(1)\frac{q^{d-1}}{d-1}\le (1)\frac{q^1}{1}+\dots+(d-1)\frac{q^{d-1}}{d-1}=q^1+\dots+q^{d-1}\le 2q^{d-1}.$$
So we have $$m(\vv,r)>4\delta-2q^{d-1}\ge 4\delta-2\delta=2\delta.$$

\end{proof}

\end{document}